\DeclareMathOperator\erf{erf}
\newlength{\CapLen}
\def\@ythm#1#2#3[#4]{\def\@currentlabelname{#4}%
  \expandafter\global\expandafter\def\csname#1name\endcsname{#4}%
  \@opargbegintheorem{#3}{\csname the#2\endcsname}{#4}%
  \ifx\thm@starredenv\@undefined
    \thm@thmcaption{#1}{{#3}{\csname the#2\endcsname}{#4}}\fi
  \ignorespaces}
\theoremstyle{plain}
\newtheorem{theorem}{Theorem}[section]
\newtheorem{corollary}[theorem]{Corollary}
\theoremstyle{definition}
\newtheorem{definition}[theorem]{Definition}
\newtheorem{example}[theorem]{Example}
\newtheorem{prop}[theorem]{Proposition}
\newtheorem{assump}{Assumption}
\theoremstyle{remark}
\newtheorem{remark}{Remark}
\begin{document}

\begin{frontmatter}

\title{A generalized AIC for models with singularities and boundaries}
\runtitle{Generalized AIC}
 \author{\fnms{Jonathan D.} \snm{Mitchell}
\corref{}\ead[label=e1]{Jonathan.Mitchell@utas.edu.au}}
\address{Department of Mathematics and Statistics\\ University of Alaska Fairbanks\\ Fairbanks, AK, 99775, USA\\
}
\address{School of Natural Sciences (Mathematics)\\ University of Tasmania\\ Hobart, TAS 7001, AUSTRALIA\\
}
\address{ARC Centre of Excellence for Plant Success in Nature and Agriculture\\ University of Tasmania\\ Hobart, TAS 7001, AUSTRALIA\\
\printead{e1}
}
\author{\fnms{Elizabeth S.} \snm{Allman},}
\and
\author{\fnms{John A.} \snm{Rhodes}
\corref{}\ead[label=e2]{e.allman@alaska.edu; j.rhodes@alaska.edu}}
\address{Department of Mathematics and Statistics\\ University of Alaska Fairbanks\\ Fairbanks, AK, 99775, USA\\
\printead{e2}
}


\runauthor{J. D. Mitchell, et al.}

\begin{abstract}
The Akaike information criterion (AIC) is a common tool for model selection. 
It is frequently used in violation of regularity conditions at
parameter space singularities and boundaries.  The expected AIC is generally not asymptotically equivalent
to its target at singularities and
boundaries, and convergence to the target at nearby parameter points may be slow. We develop a generalized AIC for candidate models with or without singularities and boundaries. We show
that the expectation of this generalized form converges everywhere in
the parameter space, and its convergence can be faster than that of
the AIC. We illustrate the generalized AIC on example
models from phylogenomics, showing that it can 
outperform the AIC and gives rise to an interpolated effective number
of model parameters, which can differ substantially from the number of parameters near singularities and boundaries. We outline methods for estimating the often unknown generating parameter and bias correction term of the generalized AIC.
\end{abstract}

\begin{keyword}[class=MSC]
\kwd[Primary ]{62B10}
\kwd[; secondary ]{92D15}
\end{keyword}

\begin{keyword}
\kwd{information criteria}
\kwd{Akaike information criterion}
\kwd{Kullback-Leibler divergence}
\kwd{singularity}
\kwd{boundary}
\kwd{regularity conditions}
\kwd{phylogenomics}
\end{keyword}



\end{frontmatter}

\section{Introduction}
\label{intro}

Information criteria, such as the AIC
\citep{akaike1974new}, are often used to select a candidate model from
several, or for weighting such models
\citep{burnham2004multimodel,burnham2002model}. Standard derivations
of these information criteria require regularity conditions, which
can be violated when the generating parameter is a singularity or
boundary of the parameter space. At such points, application of
standard information criteria may not be theoretically justified and
can perform poorly. Although violation of such conditions is not
uncommon in practice, such as with latent class analysis
\citep{ALCA2002,DziakEtAl2019}, or more general mixture models or
models with constrained parameters \citep{susko2020use}, robustness 
to model violations is seldom raised. In particular, the AIC is often 
used without consideration of the effect of parameter space geometry on its behaviour.

\smallskip

Given a candidate model satisfying regularity conditions with
parameter space $\Theta_0$,  
the AIC for  the sum $\mathcal{Z}_n$ of $n$ i.i.d. random observations $Z_i$
and maximum likelihood estimate $\widehat
\theta_n\in \Theta_0$ is
\begin{align*}
	\text{AIC}=-2\log{}L\left(\widehat{\theta}_n|\mathcal{Z}_n\right)+2\dim\left(\Theta_0\right).
\end{align*}
The AIC's \emph{bias correction term}, $2 \dim \Theta_0$, is twice the number of parameters of $\Theta_0$. However, this correction can be inaccurate or ambiguous when regularity conditions are violated, such as for models with singularities and boundaries, where
the notion of dimension is more nuanced.
Away from singularities and boundaries, the expected value of the AIC at generating parameter $\theta_0$ 
is asymptotically equivalent to a quantity $\Delta_{n}\left(\Theta_0,\theta_0\right)$,
called the \emph{target}. The target is related to the Kullback-Leibler divergence of the candidate model distribution at $\widehat \theta_n$ from the generating model distribution at $\theta_0$.
(See Equation~\eqref{AICtarget} for a precise definition.)
By choosing a model with minimal AIC, a practitioner seeks one that minimizes information loss.

\smallskip

When $\theta_0$ is a singularity or boundary of $\Theta_0$, however, the bias correction $2\dim\Theta_0$ is not generally asymptotically equivalent to its target. While the
likelihood term of the AIC is retained in the generalized AIC (AICg) developed here, for a parameter space with singularities or boundaries,
the AIC bias correction can be
modified to ensure the expected AICg is asymptotically equivalent to its target. Informally, if
singularities or boundaries are present, the parameter space may
``behave'' as if there is a different number of parameters than the
naive count gives. This \emph{effective number of parameters $k_e$} need not
be an integer, and can be smaller or larger than the number of
parameters $k = \dim\left(\Theta_0\right)$.

A non-integral $k_e$, such as that
appearing in the AICg bias correction, is not a new concept. It was
introduced by \citet{moody1992the} in the context of non-linear
learning systems and appears in the deviance information criterion
(DIC) of
\citet{spiegelhalter2002bayesian}. The AICg treats $k_e$ in a different way to ensure asymptotic equivalence of the expected AICg to its target.

In the AICg, $k_e$ is a function of $\theta_0$, and thus can vary across
$\Theta_0$. Since in practice $\theta_0$ is typically
unknown, methods are outlined here to estimate the AICg, the simplest of
which is to replace $\theta_0$ with $\widehat{\theta}_n$.

For a model with $k$ parameters, the expected AIC underestimates its target when
$k< k_e$, and overestimates when $k>k_e$. We derive the AICg explicitly for models from phylogenomics in Section
\ref{subsection:phyloModels}, including one where $k_e < k$ and one where $k>k_e$.

\smallskip

The definition of regular models varies considerably in the
literature, though a key part of the definition is always that Fisher
information matrices are non-singular \citep{watanabe2013waic,
  drton2009}. (See also the assumptions of Theorem 16.7 of
\citet{van2000asymptotic}). Watanabe defines singular models as those not having a one-to-one map between parameters and probability distributions and/or not having Fisher information matrices that are always positive definite. Models with singularities in their parameter spaces may not be singular according to this definition. Thus, 
while the widely applicable information criterion (WAIC)
\citep{watanabe2009algebraic} generalizes the AIC to singular
models in the sense of \citet{watanabe2013waic}, the WAIC does not address issues arising from
the geometry of the parameter space.

Complicating matters is that even when regularity conditions hold 
at parameters near singularities and boundaries,
convergence of the expected AIC may be slowed, and
a generalized form may converge faster to its target. In this sense, the AICg
can be thought of as a finite sample size correction to the
standard AIC. While the AICc \citep{sugiura1978further,
  hurvich1989regression} also has this interpretation, our generalized
form is more generally applicable.

Alternatives to the AIC include bootstrap variants, introduced by
\citet{efron1983estimating,efron1986biased}, and further reviewed by
\citet{efron1994introduction}. \citet{ishiguro1991} introduced the
WIC, while \citet{cavanaugh1997bootstrap}, \citet{shang2008bootstrap}
and \citet{seo2018information} developed variants for state-space
selection, mixed model selection and partition scheme selection,
respectively. To our knowledge, all existing AIC alternatives and adjustments assume
that $\theta_0$ is an interior point of $\Theta_0$ and not a singularity or
boundary.

The AICg accurately estimates its target, regardless of whether
$\theta_0$ is a singularity or boundary or not. It is based on
Equation~7.53 of \citet{burnham2002model}, although issues of
singularities and boundaries are not explored in that
work. Indeed, their following Equation~7.54 is not generally correct if the model
has singularities or boundaries.

Derivations of the AIC require a transformation of the space $\Theta$ that random observations $Z_i$ lie in. In the transformation $\Theta$ is scaled such that random observations in the transformed space have identity covariance. However, with covariance that does not converge to the zero matrix in the transformed space, the MLE is not generally an asymptotically unbiased estimate of the generating parameter at singularities and boundaries, a necessary condition to progress from Equation~7.53 to Equation~7.54.

While our AICg is not as simple to use as the AIC, and
thus the AIC may be preferred in standard applications, model
singularities and boundaries are common enough in complex models that
our generalized form can provide a useful improvement in many
situations. It highlights the need to consider parameter space geometry 
	when using the AIC and how this geometry might affect accuracy.

\smallskip

The example models considered in this article are from phylogenomics,
where evolutionary trees relating many species are inferred from
genomic data. Population-genetic effects, such as incomplete lineage
sorting modelled by the multispecies coalescent, result in some
inferred gene trees differing from the overall (generating) species tree.
These effects significantly complicate inference, testing, and model selection of species trees
and networks. No technical understanding of these biological processes
is required for this article, as these models are all trinomials.
Interested readers can consult \citet{mitchell2019hypothesis} and its appendices
for more biological background on our example models, and on 
hypothesis testing of models with singularities and boundaries.

\smallskip

The article is organized as follows. Section~\ref{definitions} gives
definitions, assumptions, and descriptions of example models.
The AICg is defined in Section~\ref{AICg} and the proof of the 
main theorem, that the expected AICg is asymptotically equivalent to its
target, is given. In Section~\ref{AICapplications} the AICg is derived for all
example models. Methods for estimating the bias correction, which generally depends on the generating parameter, and for estimating the generating parameter are given in Section~\ref{practicalAICg}.  To illustrate potential 
improvements of the AICg, we apply our techniques to example
models at and nearby singularities and boundaries and compare performance
to the AIC. 

\section{Definitions, assumptions and models}
\label{definitions}

\subsection{Definitions}

We define singularities and boundaries of parameter spaces as in
\citet{mitchell2019hypothesis} and \citet{drton2009}.  Let
$\mathcal{P}_{\Theta}$, with $\Theta\subseteq \mathbb R^k$, be a
parametric family of probability distributions on a measurable
space. We assume that the parameter space $\Theta$ is a semialgebraic
subset of $\mathbb R^k$; that is, it comprises points satisfying a
finite collection of multivariate polynomial equalities and
inequalities. For a semialgebraic $\Theta_0\subset \Theta$, we have a
subfamily $\mathcal{P}_{\Theta_0}$.

A \emph{singularity} of the parameter space $\Theta_0$ of
$\mathcal{P}_{\Theta_0}$ is either a) a point in $\Theta_0$ which lies
on multiple irreducible algebraic components of $\Theta_0$, or b) a
point that lies on only one component, but at which the Jacobian
matrix of the defining equations of that component has lower rank than
at generic points on the component.

Let $Cl\left(\Theta_0\right)$ denote the Zariski closure of
$\Theta_0$; that is, the points satisfying all the equalities defining
$\Theta_0$.  A subset of $\Theta_0$ is \emph{open} if it is the
intersection of $Cl\left(\Theta_0\right)$ with an open subset of
$\mathbb{R}^k$. The \emph{interior} of $\Theta_0$ is the union of its
open subsets, and the \emph{boundary} of $\Theta_0$ is the complement
in $\Theta_0$ of its interior.  Note that the boundary and the set of
singularities of a model do not need to be disjoint.

We adopt the regularity conditions of \citet{drton2009}:
$\mathcal{P}_{\Theta}$ is \emph{regular at}
$\theta\in\Theta\subseteq{}\mathbb{R}^k$ if it satisfies the following
conditions: 1) $\theta$ is in the interior of non-empty $\Theta$, 2)
The model $\mathcal{P}_{\Theta}$ is differentiable in quadratic mean
with non-singular Fisher information matrix $\mathcal I\left(\theta\right)$, 3)
For all $\theta_1$, $\theta_2$ in a neighborhood of $\theta$ in
$\Theta$,
$|\log{}p_{\theta_1}\left(x\right)-\log{}p_{\theta_2}\left(x\right)|\leq{}\dot{l}\left(x\right)\left\Vert\theta_1-\theta_2\right\Vert$
for measurable square-integrable function $\dot{l}$, and 4) The
maximum likelihood estimator is a consistent estimator of $\theta$
under $\mathcal{P}_{\Theta}$.

\subsection{Assumptions}

Our models satisfy the following assumptions.

\begin{assump}
\label{ass1}
$\mathcal{P}_{\Theta}$ is regular at generating parameter $\theta_0\in\Theta_0\subset{}\Theta\subseteq{}\mathbb{R}^k$.
\end{assump}

\begin{assump}
\label{ass2}
$\Theta_0$ is Chernoff regular at $\theta_0$ \citep{Chernoff54}, with maximum likelihood estimator $\widehat{\theta}_n$ 
 a consistent estimator of $\theta_0$ under $\mathcal{P}_{\Theta_0}$.
\end{assump}

\begin{assump}
\label{ass3}
$Z_i\in\Theta$, for $i\in\left\{1,2,\ldots{},n\right\}$, are i.i.d
random observations, with finite expected value $\theta_0$ and non-singular covariance $\mathcal{I}\left(\theta_0\right)^{-1}$, where
$\mathcal{I}\left(\theta_0\right)$ is the Fisher information matrix
for a sample of size $1$ and
$\mathcal{I}\left(\theta_0\right)^{\frac{1}{2}}$ a matrix such that
$\mathcal{I}\left(\theta_0\right)=\left(\mathcal{I}\left(\theta_0\right)^{\frac{1}{2}}\right)^T\mathcal{I}\left(\theta_0\right)^{\frac{1}{2}}$.
\end{assump}

We emphasize that there is no assumption that $\mathcal{P}_{\Theta_0}$ is
regular at $\theta_0$, only that $\Theta_0$ is Chernoff regular at
$\theta_0$. This weaker assumption permits $\theta_0$ to be a
singularity and/or boundary of $\Theta_0$. In fact, Chernoff
regularity everywhere in $\Theta_0$ is implied by the assumption that
$\Theta_0$ is a semialgebraic set \citep{drton2009}.

\smallskip

We briefly present and analyze a
simple model, illustrating how the expected AIC 
can fail to converge to its target at a boundary
of a parameter space.

\begin{example}[AIC for a biased coin toss]
\label{simpleexample}

A coin has probability of heads $\theta\in\Theta=\left(0,1\right)$. The submodel $\theta_0\in\Theta_0=\left[\frac{1}{2},1\right)$, in which heads is at least as likely as tails, has a boundary point at $\theta_0=\frac{1}{2}$.  Assume $n$ is sufficiently large to ignore issues near the boundaries $0$ and $1$ of $\Theta$.

The target for the expected AIC has asymptotics determined by the
tangent cone at $\theta_0$ of $\Theta_0$ and the distribution of
$\bar{\mathcal{Z}}_n=\frac{1}{n}\mathcal{Z}_n$, asymptotically
$\mathcal{N}\left(\theta_0,\frac{1}{n}\mathcal{I}\left(\theta_0\right)^{-1}\right)$. For any $\theta_0>\frac{1}{2}$, the tangent cone is a line and the expected AIC converges to its target, with the AIC having bias correction $2k=2$. At $\theta_0=\frac{1}{2}$, the tangent
cone is the half-line $\left[\frac{1}{2},\infty\right)$. Informally,
  asymptotically with probabilities $\frac{1}{2}$, $\Theta_0$
  ``behaves'' either like a point (when $\widehat{\theta}_n=\frac{1}{2}$)
  or like a line (when $\widehat{\theta}_n>\frac{1}{2}$), such that $2k_e=2\left(\frac{1}{2}\cdot{}0+\frac{1}{2}\cdot{}1\right)=1$. Thus the expected AIC is not a consistent estimator of its target, which would require AIC bias correction $1$ instead of $2k=2$. In particular, the asymptotics determined by the tangent cone are discontinuous at the boundary point $\theta_0=\frac{1}{2}$.

Indeed, although the expected AIC converges to its target when
$\widehat{\theta}_n>\frac{1}{2}$, convergence is slow for parameters
near the boundary, when $\theta_0\approx\frac{1}{2}$. 
For such $\theta_0$ it is desirable that the bias correction 
continuously ``interpolate''
between its value at $\theta_0=\frac{1}{2}$ and values at regular
points far away, but in a way dependent on the sample size $n$ that accurately estimates $2k_e$. This is the goal of
developing the generalized AIC.

\end{example}

\subsection{Example models}\label{subsection:phyloModels}

We introduce five  models, all trinomial.  Four are  from
phylogenomics, the field of mathematical biology concerned
with the inference of evolutionary trees from genomic-scale data, and a fifth more general
model. The four phylogenomics models are based on the \emph{multispecies
coalescent} (MSC) model of \emph{incomplete lineage sorting}, which
can result in different evolutionary relationships on different genes.

For three species, $a$, $b$ and $c$, and three orthologous genes $A$, $B$, $C$, descending from a common ancestral gene, there are three
possible rooted gene trees, $A|BC$, $B|AC$ and $C|AB$, describing the gene
triplet's evolutionary history. If an inferred gene tree has topology $B|AC$, for example, then $A$ and $C$ are the two most closely related genes.
Assuming the MSC model,
the most probable gene tree (rooted triple) topology is the one matching the species
tree topology, the overall species history, and the other gene tree triplet
probabilities are equal. 
See Appendix A of \citet{mitchell2019hypothesis} for a brief, but more thorough,
introduction to these trinomial phylogenomic models.

\smallskip

Model $T1$ posits a specific triplet species tree topology. With
$\Theta = \Delta^2$ denoting the open 2-dimensional simplex, its parameter space is the line segment
\begin{align*}
\Theta_0=\left\{\mathbf p\in \Theta \mid p_1\ge p_2=p_3 \right\},
\end{align*}
with a boundary at the centroid $\left(\frac{1}{3},\frac{1}{3},\frac{1}{3}\right)$ of $\Theta$ corresponding to a ``star tree'' evolutionary history. 
Model $T3$ permits any of the three species tree topologies and 
has parameter space \begin{align*}
\Theta_0=\left\{\mathbf p\in \Theta \mid p_i\ge p_j=p_k, \left\{i,j,k\right\}=\left\{1,2,3\right\} \right\},
\end{align*}
with a singularity at the centroid. The \emph{unconstrained} model $U$ has $\Delta^2$ as its parameter space, and might be used to model evolution not on a species tree.  The \emph{polytomy} model, modeling a ``star tree'' where all three species are equally closely related, has the centroid as its parameter space. See Figure \ref{fig:ex12} for the parameter spaces of the first four models. 
The fifth more general model is the \emph{multiple half-lines} model, a specific
generalization of $T1$ and $T3$. All models satisfy Assumptions~\ref{ass1}-\ref{ass3}.

\begin{figure}[!tb]
\begin{subfigure}{0.49\linewidth}
\caption{Model $T1$}
\label{fig:ex12a}
\hspace*{\fill}
\begin{tikzpicture}[xscale=0.98,yscale=0.98]
				\draw[-] (2.31,0) -- (2.31,-2.67);
				\draw[-,dashed] (2.31,0) -- (0,-4);
				\draw[-,dashed] (2.31,0) -- (4.62,-4);
				\draw[-,dashed] (0,-4) -- (4.62,-4);
				\draw (2.31,0) node[circle,draw=black, fill=white,inner sep=1pt,label=above:{$\left(1,0,0\right)$}]{};
				\draw (2.31,-2.67) node[circle,fill,inner sep=1pt,label=below:{$\left( \frac 13,\frac 13,\frac13\right)$}]{};
                \draw (0,-4) node[circle,draw=black, fill=white,inner sep=1pt,label=below:{$\left(0,1,0 \right)$}]{};
                \draw (4.62,-4) node[circle,draw=black, fill=white,inner sep=1pt,label=below:{$\left(0,0,1\right)$}]{};
\end{tikzpicture}
\end{subfigure}
\hfill
\begin{subfigure}{0.49\linewidth}
\caption{Model $T3$}
\label{fig:ex12b}
\begin{tikzpicture}[xscale=0.98,yscale=0.98]
				\draw[-] (0,-4) -- (2.31,-2.67);
                \draw[-] (2.31,0) -- (2.31,-2.67);
                \draw[-] (4.62,-4) -- (2.31,-2.67);
                
				\draw[-,dashed] (2.31,0) -- (0,-4);
				\draw[-,dashed] (2.31,0) -- (4.62,-4);
				\draw[-,dashed] (0,-4) -- (4.62,-4);
				\draw (2.31,0) node[circle,draw=black, fill=white,inner sep=1pt,label=above:{$\left(1,0,0\right)$}]{};
				\draw (2.31,-2.67) node[circle,fill,inner sep=1pt,label=right:{$\left( \frac 13,\frac 13,\frac13\right)$}]{};
                \draw (0,-4) node[circle,draw=black, fill=white,inner sep=1pt,label=below:{$\left(0,1,0 \right)$}]{};
                \draw (4.62,-4) node[circle,draw=black, fill=white,inner sep=1pt,label=below:{$\left(0,0,1\right)$}]{};
				\end{tikzpicture}                
\end{subfigure}
\hspace*{\fill}
\caption{The models $T1$ (a) and $T3$ (b),
  represented by the solid line segment(s) in $\Delta^2$. The centroid
  $\left(\frac{1}{3},\frac{1}{3},\frac{1}{3}\right)$ is a boundary of model $T1$ and a
  singularity of model $T3$,   and also the  polytomy model parameter space. 
  All these models are contained in $\Delta^2$, the unconstrained model parameter space.}
\label{fig:ex12}
\end{figure}
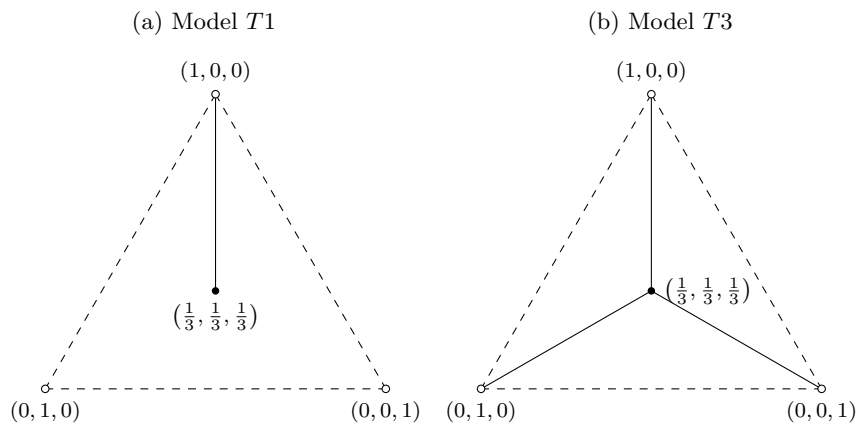

\section{A generalized AIC}
\label{AICg}

Common derivations of the AIC, such as that given by
\citet{cavanaugh1997unifying}, involve Taylor series in $\theta$ at
$\theta_0$ and $\widehat{\theta}_n$.  Since our interest is in
singularities and boundaries, Taylor series fail to exist, and we
follow an alternate framework.

For a sample of size $n$, let
\begin{align}
\label{AICtarget}
\Delta_n=\Delta_n\left(\Theta_0,\theta_0\right)=\mathbb{E}_{0}\left\{\mathbb{E}_0\left\{-2\log{}L\left(\theta|\mathcal{Z}_n\right)\right\}
\big\vert_{\theta=\widehat{\theta}_n}\right\}.
\end{align}
This is the \emph{target} for the AIC and AICg. (See \citet{cavanaugh1997unifying} for the connection between the target $\Delta_n$ and the Kullback-Leibler divergence of the approximating model probability distribution at $\widehat{\theta}_n$ from the generating model probability distribution at $\theta_0$.)

\citet{cavanaugh1997unifying} derives an unbiased estimator of the target for all $\theta_0\in\Theta_0$:
\begin{multline} \label{AICmotivation}
-2\log{}L\left(\widehat{\theta}_n|\mathcal{Z}_n\right) \\
+\mathbb{E}_0\left\{\mathbb{E}_0\left\{-2\log{}L\left(\theta|\mathcal{Z}_n\right)\right\}
\big\vert_{\theta=\widehat{\theta}_n}\right\}-\mathbb{E}_0\left\{-2\log{}L\left(\widehat{\theta}_n|\mathcal{Z}_n\right)\right\}.
\end{multline}
The expected value of Expression~\ref{AICmotivation} yields the target. Its derivation requires no special assumptions
other than well-defined quantities.   Our derivation of the
AICg approximates the last two terms, the \emph{bias correction}, in a
general way.

\begin{definition}[Generalized AIC]
\label{defAICg}
For a model meeting the assumptions of Section~\ref{definitions}, the \emph{generalized AIC} for a sample $\mathcal{Z}_n=\sum_{i=1}^{n}Z_i$ of size $n$ is
\begin{align*}
\text{AICg}=&
\text{AICg}\left({\mathcal Z}_n, \Theta_0, \theta_0\right)\\
=&-2\log{}L\left(\widehat{\theta}_n|\mathcal{Z}_n\right)+2\mathbb{E}_0\left\{n\left(\bar{\mathcal{Z}}_n-\theta_0\right)^T\mathcal{I}\left(\theta_0\right)\left(\widehat{\theta}_n-\theta_0\right)\right\}.
\end{align*}
\end{definition}

\begin{theorem}
\label{generalizedAIC}
Under the assumptions of Section~\ref{definitions}, $\mathbb{E}_0
\left\{\emph{AICg}\right\}$ is asymptotically equivalent to its target
$\Delta_{n}\left(\Theta_0,\theta_{0}\right)$ in the sense that
\begin{align*}
\mathbb{E}_0\left\{\emph{AICg}\right\}-\Delta_{n}\left(\Theta_0,\theta_{0}\right)\to{}0.
\end{align*}
\begin{proof}

We focus first on finding an expression for the target $\Delta_{n}\left(\Theta_0,\theta_{0}\right)$. For an arbitrary $\theta\in\Theta_0$, by the assumptions of Section~\ref{definitions}, with $c$ a constant,
\begin{align}
\label{equation}
-2\log{}L\left(\theta|\mathcal{Z}_n\right)=n\left(\bar{\mathcal{Z}}_n-\theta\right)^T\mathcal{I}\left(\theta_0\right)\left(\bar{\mathcal{Z}}_n-\theta\right)+c+o_p\left(1\right).
\end{align}

Let $A_n$ and $B_n$ be the random variables on the left and right sides of Equation~\ref{equation}, respectively. With subscripts of zero denoting quantities under the generating process with generating parameter $\theta_0 \in \Theta_0$, by Assumption~\ref{ass3}, $\mathbb{E}_0\left\{B_n\right\}<\infty$, and thus also $\mathbb{E}_0\left\{A_n\right\}<\infty$. Thus, $\mathbb{E}_0\left\{A_n\right\}=\mathbb{E}_0\left\{B_n\right\}$ is equivalent to
\begin{align*}
\mathbb{E}_0\left\{-2\log{}L\left(\theta|\mathcal{Z}_n\right)\right\}=\mathbb{E}_0\left\{n\left(\bar{\mathcal{Z}}_n-\theta\right)^T\mathcal{I}\left(\theta_0\right)\left(\bar{\mathcal{Z}}_n-\theta\right)\right\}+c+o\left(1\right).
\end{align*}
Evaluating $\mathbb{E}_0\left\{A_n\right\}$ at $\theta=\theta_0$ and since $\widehat{\theta}_n\xrightarrow{\ p\ }\theta_0$, then by the continuous mapping theorem,
\begin{align*}
\mathbb{E}_0\left\{-2\log{}L\left(\theta|\mathcal{Z}_n\right)\right\}|_{\theta=\theta_0}=\mathbb{E}_0\left\{-2\log{}L\left(\theta|\mathcal{Z}_n\right)\right\}|_{\theta=\widehat{\theta}_n}+o_p\left(1\right).
\end{align*}
Similarly, for $\mathbb{E}_0\left\{B_n\right\}$,
\begin{align*}
&\mathbb{E}_0\left\{n\left(\bar{\mathcal{Z}}_n-\theta\right)^T\mathcal{I}\left(\theta_0\right)\left(\bar{\mathcal{Z}}_n-\theta\right)\right\}|_{\theta=\theta_0}+c+o\left(1\right) \\
=&\mathbb{E}_0\left\{n\left(\bar{\mathcal{Z}}_n-\theta\right)^T\mathcal{I}\left(\theta_0\right)\left(\bar{\mathcal{Z}}_n-\theta\right)\right\}|_{\theta=\widehat{\theta}_n}+c+o_p\left(1\right).
\end{align*}
It follows from $\mathbb{E}_0\left\{A_n\right\}=\mathbb{E}_0\left\{B_n\right\}$ that
\begin{align}
\label{equation2}
&\mathbb{E}_0\left\{-2\log{}L\left(\theta|\mathcal{Z}_n\right)\right\}|_{\theta=\widehat{\theta}_n} \nonumber \\
=&\mathbb{E}_0\left\{n\left(\bar{\mathcal{Z}}_n-\theta\right)^T\mathcal{I}\left(\theta_0\right)\left(\bar{\mathcal{Z}}_n-\theta\right)\right\}|_{\theta=\widehat{\theta}_n}+c+o_p\left(1\right).
\end{align}

Finally, taking the expected value of Equation~\ref{equation2}, which is again finite by Assumption~\ref{ass3}, we obtain the expression for the target:
\begin{align*}
\Delta_n\left(\Theta_0,\theta_0\right)=&\mathbb{E}_0\left\{\mathbb{E}_0\left\{-2\log{}L\left(\theta|\mathcal{Z}_n\right)\right\}|_{\theta=\widehat{\theta}_n}\right\} \\
=&\mathbb{E}_0\left\{\mathbb{E}_0\left\{n\left(\bar{\mathcal{Z}}_n-\theta\right)^T\mathcal{I}\left(\theta_0\right)\left(\bar{\mathcal{Z}}_n-\theta\right)\right\}|_{\theta=\widehat{\theta}_n}\right\}+c+o\left(1\right) \\
=&\mathbb{E}_0\left\{\mathbb{E}_0\left\{n\left(\bar{\mathcal{Z}}_n-\theta_0\right)^T\mathcal{I}\left(\theta_0\right)\left(\bar{\mathcal{Z}}_n-\theta_0\right)\right\}|_{\theta=\widehat{\theta}_n}\right\} \\
&-2\mathbb{E}_0\left\{\mathbb{E}_0\left\{n\left(\bar{\mathcal{Z}}_n-\theta_0\right)^T\mathcal{I}\left(\theta_0\right)\left(\theta-\theta_0\right)\right\}|_{\theta=\widehat{\theta}_n}\right\} \\
&+\mathbb{E}_0\left\{\mathbb{E}_0\left\{n\left(\theta-\theta_0\right)^T\mathcal{I}\left(\theta_0\right)\left(\theta-\theta_0\right)\right\}|_{\theta=\widehat{\theta}_n}\right\}+c+o\left(1\right).
\end{align*}

The quantity simplifies to
\begin{align*}
\Delta_n\left(\Theta_0,\theta_0\right)=\dim\left(\Theta\right)+\mathbb{E}_0\left\{n\left(\widehat{\theta}_n-\theta_0\right)^{T}\mathcal{I}\left(\theta_0\right)\left(\widehat{\theta}_n-\theta_0\right)\right\}+c+o\left(1\right).
\end{align*}

Next, focussing on the first term of the AICg, by similar arguments,
\begin{align*}
\mathbb{E}_{0}\left\{-2\log{}L\left(\widehat{\theta}_n|\mathcal{Z}_n\right)\right\}=&\mathbb{E}_0\left\{n\left(\bar{\mathcal{Z}}_n-\widehat{\theta}_n\right)^T\mathcal{I}\left(\theta_0\right)\left(\bar{\mathcal{Z}}_n-\widehat{\theta}_n\right)\right\}+c+o\left(1\right) \\
=&\dim\left(\Theta\right)-2\mathbb{E}_0\left\{n\left(\bar{\mathcal{Z}}_n-\theta_0\right)^T\mathcal{I}\left(\theta_0\right)\left(\widehat{\theta}_n-\theta_0\right)\right\} \\
&+\mathbb{E}_0\left\{n\left(\widehat{\theta}_n-\theta_0\right)^{T}\mathcal{I}\left(\theta_0\right)\left(\widehat{\theta}_n-\theta_0\right)\right\}+c+o\left(1\right),
\end{align*}
where $c$ is the same constant as in Equation~\ref{equation}.

The result then follows.
\end{proof}

\end{theorem}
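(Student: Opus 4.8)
The plan is to avoid Taylor expansion at $\theta_0$, which is unavailable when $\theta_0$ is a singularity or boundary, and instead work entirely from the single quadratic‑approximation identity~\eqref{equation} together with first and second moments, which stay finite under Assumption~\ref{ass3}. First I would upgrade~\eqref{equation} from an $o_p(1)$ statement to one about expectations: since both sides of~\eqref{equation} have finite $\mathbb{E}_0$, for each fixed $\theta\in\Theta_0$,
\begin{align*}
\mathbb{E}_0\left\{-2\log L(\theta|\mathcal{Z}_n)\right\}=\mathbb{E}_0\left\{n(\bar{\mathcal{Z}}_n-\theta)^T\mathcal{I}(\theta_0)(\bar{\mathcal{Z}}_n-\theta)\right\}+c+o(1),
\end{align*}
with $c$ the same constant for every $\theta$. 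Evaluating at $\theta=\widehat{\theta}_n$ and invoking consistency of $\widehat{\theta}_n$ with the continuous mapping theorem then lets me swap $\widehat{\theta}_n$ for $\theta_0$ inside any expression where the swap introduces only an $o_p(1)$ change.

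Second, I would compute the target. Writing $\bar{\mathcal{Z}}_n-\theta=(\bar{\mathcal{Z}}_n-\theta_0)-(\theta-\theta_0)$ inside the inner expectation of $\Delta_n$ splits the quadratic form into three pieces. Because that inner $\mathbb{E}_0$ integrates an independent copy of the data with $\theta$ held fixed: the pure‑$\bar{\mathcal{Z}}_n$ piece equals $\tr\!\big(\mathcal{I}(\theta_0)\,\cov_0(\sqrt{n}\,\bar{\mathcal{Z}}_n)\big)=\dim(\Theta)$ by Assumption~\ref{ass3} and $\cov_0(\sqrt{n}\,\bar{\mathcal{Z}}_n)=\mathcal{I}(\theta_0)^{-1}$; the cross piece vanishes since $\mathbb{E}_0\bar{\mathcal{Z}}_n=\theta_0$; and the pure‑$(\theta-\theta_0)$ piece, after setting $\theta=\widehat{\theta}_n$ and taking the outer expectation, is $\mathbb{E}_0\{n(\widehat{\theta}_n-\theta_0)^T\mathcal{I}(\theta_0)(\widehat{\theta}_n-\theta_0)\}$. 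Hence $\Delta_n(\Theta_0,\theta_0)=\dim(\Theta)+\mathbb{E}_0\{n(\widehat{\theta}_n-\theta_0)^T\mathcal{I}(\theta_0)(\widehat{\theta}_n-\theta_0)\}+c+o(1)$.

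Third, I would run the same computation on the likelihood term of the AICg, using~\eqref{equation} at $\theta=\widehat{\theta}_n$ and expanding $\bar{\mathcal{Z}}_n-\widehat{\theta}_n=(\bar{\mathcal{Z}}_n-\theta_0)-(\widehat{\theta}_n-\theta_0)$. The only difference from the target computation is that $\bar{\mathcal{Z}}_n$ and $\widehat{\theta}_n$ are now built from the same sample, so the cross piece no longer vanishes: it contributes $-2\mathbb{E}_0\{n(\bar{\mathcal{Z}}_n-\theta_0)^T\mathcal{I}(\theta_0)(\widehat{\theta}_n-\theta_0)\}$, which is precisely the negative of the bias‑correction term added in Definition~\ref{defAICg}. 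Adding that bias correction cancels the cross piece, leaving $\mathbb{E}_0\{\text{AICg}\}=\dim(\Theta)+\mathbb{E}_0\{n(\widehat{\theta}_n-\theta_0)^T\mathcal{I}(\theta_0)(\widehat{\theta}_n-\theta_0)\}+c+o(1)$, which matches the expression for $\Delta_n$ term by term — the constants $c$ agree because~\eqref{equation} yields the same $c$ at every argument — so the difference is $o(1)$.

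The main obstacle is the remainder control needed for the first step. Two things are required: integrability, so that $\mathbb{E}_0\{o_p(1)\}=o(1)$, which I expect to obtain from uniform integrability supplied by the moment hypotheses of Assumption~\ref{ass3}; and local uniformity of the $o_p(1)$ term over a neighborhood of $\theta_0$, so that~\eqref{equation} may legitimately be evaluated at the random point $\widehat{\theta}_n$, which is where differentiability in quadratic mean (Assumption~\ref{ass1}) and Chernoff regularity of $\Theta_0$ at $\theta_0$ (Assumption~\ref{ass2}) do the work. Once that is in place the remainder is moment bookkeeping.
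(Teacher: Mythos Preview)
Your proposal is correct and follows essentially the same route as the paper: start from the quadratic approximation~\eqref{equation}, pass to expectations, evaluate at $\widehat{\theta}_n$ via consistency and the continuous mapping theorem, expand $(\bar{\mathcal{Z}}_n-\theta)$ about $\theta_0$, and use Assumption~\ref{ass3} to identify the pure-$\bar{\mathcal{Z}}_n$ piece as $\dim(\Theta)$. Your explicit articulation of why the cross term vanishes in the target (independent copy in the inner expectation) but survives in $\mathbb{E}_0\{-2\log L(\widehat{\theta}_n|\mathcal{Z}_n)\}$ (same sample) is, if anything, a shade clearer than the paper's own presentation, and your closing paragraph on the remainder control names exactly the technical obligations the paper leaves implicit.
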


Under the assumptions of Section~\ref{definitions}, the second term of the AICg is asymptotically equivalent to the 
bias correction (last two terms) of Expression~\ref{AICmotivation}.

An equivalent expression that may be easier to compute for some models is
\begin{align*}
\text{AICg}=&-2\log{}L\left(\widehat{\theta}_n|\mathcal{Z}_n\right)+\dim\left(\Theta\right)
+\mathbb{E}_0\left\{n\left(\widehat{\theta}_n-\theta_0\right)^T\mathcal{I}\left(\theta_0\right)\left(\widehat{\theta}_n-\theta_0\right)\right\} \\
&-\mathbb{E}_0\left\{n\left(\bar{\mathcal{Z}}_n-\widehat{\theta}_n\right)^T\mathcal{I}\left(\theta_0\right)\left(\bar{\mathcal{Z}}_n-\widehat{\theta}_n\right)\right\}.
\end{align*}

\begin{remark}\label{rmk:linear_transformation}
To derive the AICg for specific models more easily, we apply a linear transformation 
$\Theta\mapsto{}\sqrt{n}\mathcal{I}\left(\theta_0\right)^{\frac{1}{2}}\Theta$ to the bias correction. 
Under this transformation, $\Theta\mapsto{}\mathcal{M}$, $\Theta_0\mapsto{}\mathcal{M}_0$, 
$\theta_0\mapsto{}\mu_0$, $\widehat{\theta}_n\mapsto{}\widehat{\mu}_n$, 
$\bar{\mathcal{Z}}_n\mapsto{}\bar{z}_n$ and 
$\bar{z}_n$ converges in distribution to 
$z\sim\mathcal{N}\left(\mu_0,I\right)$, where $I$ is the identity matrix. After this transformation to the bias correction,
\begin{align}
\label{AICgtransformed}
\text{AICg}=-2\log{}L\left(\widehat{\theta}_n|\mathcal{Z}_n\right)+2\mathbb{E}_0\left\{\left(\bar{z}_n-\mu_0\right)^T\left(\widehat{\mu}_n-\mu_0\right)\right\}.
\end{align}

The value $\left\Vert\mu_0\right\Vert$ has a simple interpretation: It represents the Mahalanobis distance between the generating parameter $\theta_0$ and a singularity or boundary, which is defined to be at the origin. For models with one parameter, $\left\Vert\mu_0\right\Vert$ is the number of standard deviations of $\bar{\mathcal{Z}}_n$ between $\theta_0$ and the singularity or boundary.

Moreover, asymptotically this bias correction is non-negative, which we prove.
\end{remark}

\begin{prop}
	\label{nonnegpenalty} 
	For a model satisfying the assumptions of Section~\ref{definitions}, asymptotically the bias correction of Equation~\ref{AICgtransformed} is non-negative. Specifically,
	\begin{align*}
		2\mathbb{E}_0\left\{\left(\bar{z}_n-\mu_0\right)^T\left(\widehat{\mu}_n-\mu_0\right)\right\}\geq{}0.
	\end{align*}
	\begin{proof}
We prove that
\begin{align*}
\left(\bar z_n-\mu_0\right)^T\left(\widehat{\mu}_n-\mu_0\right)=\left(\bar z_n-\mu_0\right)\cdot{}\left(\widehat{\mu}_n-\mu_0\right)\geq{}0.
\end{align*}
If the dot product is negative, then considering the lengths of the triangle with vertices $\bar{z}_n$, $\mu_0$ and $\widehat{\mu}_n$, we find that $\left\Vert\bar{z}_n-\mu_0\right\Vert<\left\Vert\bar{z}_n-\widehat{\mu}_n\right\Vert$, a contradiction since $\widehat{\mu}_n$ is the maximum likelihood estimate.

	\end{proof}
\end{prop}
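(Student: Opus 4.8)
The plan is to establish that the integrand $(\bar z_n-\mu_0)^T(\widehat\mu_n-\mu_0)$ is non-negative up to an $o_p(1)$ error, pointwise, and then integrate. The natural starting point is Remark~\ref{rmk:linear_transformation}: after the linear transformation, Equation~\eqref{equation} becomes
\begin{align*}
-2\log L(\theta|\mathcal{Z}_n)=\|\bar z_n-\mu\|^2+c+o_p(1),
\end{align*}
so that maximizing the likelihood over $\Theta_0$ amounts, asymptotically, to minimizing $\mu\mapsto\|\bar z_n-\mu\|^2$ over $\mu\in\mathcal{M}_0$. Hence $\widehat\mu_n$ is (asymptotically) the Euclidean projection of $\bar z_n$ onto $\mathcal{M}_0$, and since $\mu_0\in\mathcal{M}_0$ is a competing point, $\|\bar z_n-\widehat\mu_n\|^2\le\|\bar z_n-\mu_0\|^2+o_p(1)$.

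Next I would use the polarization identity
\begin{align*}
2(\bar z_n-\mu_0)^T(\widehat\mu_n-\mu_0)=\|\bar z_n-\mu_0\|^2+\|\widehat\mu_n-\mu_0\|^2-\|\bar z_n-\widehat\mu_n\|^2,
\end{align*}
which is exactly the law of cosines for the triangle with vertices $\bar z_n$, $\mu_0$, $\widehat\mu_n$ referenced in the statement. Substituting the projection inequality gives
\begin{align*}
2(\bar z_n-\mu_0)^T(\widehat\mu_n-\mu_0)\ge\|\widehat\mu_n-\mu_0\|^2-o_p(1)\ge-o_p(1),
\end{align*}
so a strictly negative dot product is asymptotically excluded, which is the promised contradiction with $\widehat\mu_n$ being the maximum likelihood estimate. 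Taking $\mathbb{E}_0$ of both sides — all terms are integrable by Assumption~\ref{ass3}, and the remainder is uniformly integrable, so its expectation vanishes — yields $2\mathbb{E}_0\{(\bar z_n-\mu_0)^T(\widehat\mu_n-\mu_0)\}\ge\mathbb{E}_0\{\|\widehat\mu_n-\mu_0\|^2\}-o(1)\ge-o(1)$, the asserted asymptotic non-negativity; in fact this also exhibits $\mathbb{E}_0\{\|\widehat\mu_n-\mu_0\|^2\}$ as an asymptotic lower bound for the bias correction.

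The main obstacle is making ``asymptotically'' precise: the identification of $\widehat\mu_n$ with a projection onto $\mathcal{M}_0$ holds only modulo the $o_p(1)$ error in the quadratic expansion of the log-likelihood, so the pointwise inequality carries an $o_p(1)$ slack that must be shown negligible after integration — this is where the finite second moments in Assumption~\ref{ass3} supply the uniform integrability needed to pass from $o_p(1)$ to $o(1)$ in expectation. If one instead treats $\widehat\mu_n$ as the exact minimizer over $\mathcal{M}_0$ (Chernoff regularity, Assumption~\ref{ass2}, makes the tangent-cone model at $\mu_0$ clean), the argument collapses to the one-line triangle observation sketched in the excerpt, and the bias correction is bounded below by $\mathbb{E}_0\{\|\widehat\mu_n-\mu_0\|^2\}\ge 0$ exactly.
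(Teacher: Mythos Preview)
Your proposal is correct and follows essentially the same approach as the paper: both arguments hinge on the law of cosines/polarization identity for the triangle $\bar z_n,\mu_0,\widehat\mu_n$, together with the fact that $\widehat\mu_n$ minimizes (asymptotically) the squared distance to $\bar z_n$ over $\mathcal{M}_0$, so $\|\bar z_n-\widehat\mu_n\|\le\|\bar z_n-\mu_0\|$. The paper simply states the contrapositive in one line, treating $\widehat\mu_n$ as the exact nearest point; you write out the polarization identity explicitly, track the $o_p(1)$ slack from the quadratic expansion, and note the bonus lower bound $\mathbb{E}_0\{\|\widehat\mu_n-\mu_0\|^2\}$ --- additional detail, but the same idea (as you yourself observe in your final paragraph).
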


\section{Applications of the AICg to example models}
\label{AICapplications}

In this section we derive explicit formulas for the AICg for example models $T1$, $T3$ and the multiple half-lines model. Since the example models consist of half-open line segments
in $\Delta^2$ with interesting geometry at the centroid $\theta_0=\left(\frac{1}{3},\frac{1}{3},\frac{1}{3}\right)$, we replace these line segments with half-lines extending from $\theta_0$ and $\Delta^2$ with $\mathbb{R}^2$. Furthermore, we assume random observations are multivariate normally distributed, as they are asymptotically.

\subsection{Model $T1$}

The linear transformation of Remark~\ref{rmk:linear_transformation} and extension of parameter spaces projects $\Theta=\Delta^2$ onto $\mathbb{R}^2$, with $\mathcal{M}_0=\mathbb{R}^{+}$, $\mu_0=\left(0,\mu_{0,y}\right)$ and $\left(\frac{1}{3},\frac{1}{3},\frac{1}{3}\right)\mapsto{}\left(0,0\right)$. Defining $\phi_0 \in \left(0,1\right]$ via $p_1=1-\frac{2}{3}\phi_0$, then from \citet{mitchell2019hypothesis}, $\mu_{0,y}=\sqrt{2n}\frac{1-\phi_0}{\sqrt{\phi_0\left(3-2\phi_0\right)}}$.

\begin{prop}
\label{propT1}
For model $T1$,
\begin{align*}
\text{AICg}=&-2\log{}L\left(\widehat{\theta}_n|\mathcal{Z}_n\right)+1+\erf\left(\frac{\mu_{0,y}}{\sqrt{2}}\right).
\end{align*}

\begin{proof}

Let $w=\left(x,y\right)^T$ be an arbitrary realization of $\bar{z}_n$ in $\mathcal{M}$, and 
 $m_0= \left(0,m_{0,y}\right)^T$ the point of $\mathcal{M}_0$ closest to $w$ in Euclidean distance.

When $y<0$, $m_0=\left(0,0\right)^T$ and thus 
$\left(w-\mu_0\right)^T\left(m_0-\mu_0\right) = -\mu_{0,y}\left(y-\mu_{0,y}\right)$.

When $y\geq{}0$, $m_0 = \left(0,y\right)^T$ and 
$\left(w-\mu_0\right)^T\left(m_0-\mu_0\right)
=\left(y-\mu_{0,y}\right)^2$.

Then the AICg bias correction is
\begin{align*}
&2\mathbb{E}_0\left\{\left(\bar{z}_n-\mu_0\right)^T\left(\widehat{\mu}_n-\mu_0\right)\right\}\\
=&2\int_{-\infty}^{0}-\mu_{0,y}\left(y-\mu_{0,y}\right)\frac{1}{\sqrt{2\pi}}\exp\left(-\frac{1}{2}\left(y-\mu_{0,y}\right)^2\right)dy \\
&+2\int_{0}^{\infty}\left(y-\mu_{0,y}\right)^2\frac{1}{\sqrt{2\pi}}\exp\left(-\frac{1}{2}\left(y-\mu_{0,y}\right)^2\right)dy \\
=&1+\erf\left(\frac{\mu_{0,y}}{\sqrt{2}}\right).
\end{align*}
\end{proof}

\end{prop}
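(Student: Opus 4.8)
The plan is to evaluate the bias-correction term $2\mathbb{E}_0\{(\bar z_n-\mu_0)^T(\widehat\mu_n-\mu_0)\}$ of Equation~\eqref{AICgtransformed} directly, using the explicit geometry supplied by the transformation of Remark~\ref{rmk:linear_transformation}: the transformed parameter space is the half-line $\mathcal{M}_0=\{(0,t)\tc t\ge 0\}$, the generating point is $\mu_0=(0,\mu_{0,y})$ with $\mu_{0,y}\ge 0$ (as $\phi_0\in(0,1]$), and, under the normality assumption adopted in this section, $\bar z_n\sim\mathcal{N}(\mu_0,I)$ on $\mathcal{M}=\mathbb{R}^2$. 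The key initial observation is that, because the transformed likelihood is Gaussian with identity covariance, maximizing it over $\mathcal{M}_0$ is the same as minimizing Euclidean distance to $\mathcal{M}_0$; hence the transformed MLE $\widehat\mu_n$ is the nearest-point projection of $\bar z_n$ onto the half-line. Writing a realization of $\bar z_n$ as $w=(x,y)$, that projection $m_0$ is the origin $(0,0)$ when $y<0$ and $(0,y)$ when $y\ge 0$.

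With $\widehat\mu_n=m_0$ identified, the next step is a short case analysis of the integrand $(w-\mu_0)^T(\widehat\mu_n-\mu_0)$. For $y<0$ it equals $-\mu_{0,y}(y-\mu_{0,y})$, and for $y\ge 0$ it equals $(y-\mu_{0,y})^2$; in both regimes the first coordinate $x$ drops out, so the expectation over $\bar z_n\sim\mathcal{N}(\mu_0,I)$ reduces to a one-dimensional integral against the $\mathcal{N}(\mu_{0,y},1)$ law of the second coordinate:
\[
2\int_{-\infty}^{0}-\mu_{0,y}\left(y-\mu_{0,y}\right)\frac{e^{-\frac12\left(y-\mu_{0,y}\right)^2}}{\sqrt{2\pi}}\,dy
+2\int_{0}^{\infty}\left(y-\mu_{0,y}\right)^2\frac{e^{-\frac12\left(y-\mu_{0,y}\right)^2}}{\sqrt{2\pi}}\,dy.
\]

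Finally I would compute the two integrals after the substitution $u=y-\mu_{0,y}$, which turns them into standard normal moments over $u<-\mu_{0,y}$ and $u>-\mu_{0,y}$. Using $\int u\varphi(u)\,du=-\varphi(u)$ and $\int u^2\varphi(u)\,du=\Phi(u)-u\varphi(u)$, with $\varphi$ and $\Phi$ the standard normal density and cdf, the first integral equals $2\mu_{0,y}\varphi(\mu_{0,y})$ and the second equals $2\bigl(1-\mu_{0,y}\varphi(\mu_{0,y})-\Phi(-\mu_{0,y})\bigr)$; the $\mu_{0,y}\varphi(\mu_{0,y})$ contributions cancel, leaving $2-2\Phi(-\mu_{0,y})=2\Phi(\mu_{0,y})=1+\erf\left(\frac{\mu_{0,y}}{\sqrt{2}}\right)$, which is exactly the bias correction appearing in the claimed formula. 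The main obstacle, such as it is, is the first step: justifying that the MLE is the Euclidean projection onto $\mathcal{M}_0$ and setting up the correct case split at $y=0$ (together with recording that $\mu_{0,y}\ge 0$, so $\mu_0$ itself lies on the half-line). Everything after that is routine bookkeeping with elementary Gaussian integrals.
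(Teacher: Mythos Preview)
Your proposal is correct and follows essentially the same approach as the paper: identify $\widehat\mu_n$ as the Euclidean projection of $\bar z_n$ onto the half-line $\mathcal{M}_0$, split into the cases $y<0$ and $y\ge 0$, and integrate against the $\mathcal{N}(\mu_{0,y},1)$ density of the second coordinate. You supply more justification (why the MLE is the projection) and more detail in evaluating the integrals via the substitution $u=y-\mu_{0,y}$ than the paper does, but the route is the same.
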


In Figure~\ref{figureT1} we compare
the performance of the AICg bias correction with known $\mu_{0,y}$, 
the AIC bias correction, and their target.
Since the AICg depends on $\mu_{0,y} = \mu_{0,y}\left(n\right)$, we make these
comparisons for sample sizes $n\in\left\{30,100,1000\right\}$. 
For estimating the target, which is
challenging to determine in closed form, we averaged $10^7$
simulations for each
$\mu_{0,y}\in\left\{0,0.02,0.04,\ldots{},5\right\}$ and fitted a cubic
smoothing spline. We see that the AICg bias correction quickly
converges to its target, accurately interpolating from $1$ (corresponding to an effective
number of parameters $k_e=\frac{1}{2} < 1 = k$) at the boundary point
to $2$ (for effective number of parameters $k_e=1=k$) at points
infinitely far from the boundary.  The interpolating feature of the AICg
addresses in a sample-size dependent
way the practical concern of computing effective numbers of parameters
at generating parameters near the boundary. By way of contrast, 
note that (except for when $n$ is small) the AIC always overestimates the bias correction ($k > k_e$), 
with the magnitude of the error particularly large at points near
the boundary $\mu_{0,y} = 0$, since it assigns a constant bias correction of $2$.
The AIC also converges
slowly to the target, requiring a Mahalanobis distance of $\mu_{0,y}\approx{}2$ 
or more from the boundary for accurate performance.

\begin{figure}[!tb]
\hspace*{\fill}
\begin{subfigure}{.32\linewidth}
  \includegraphics[width=1\linewidth]{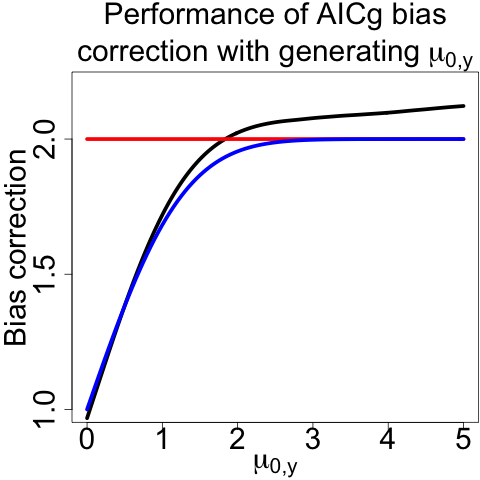}
  \caption{$n=30$}
  \label{fig:NiceImage1}
\end{subfigure}
\hfill
\begin{subfigure}{.32\linewidth}
  \includegraphics[width=1\linewidth]{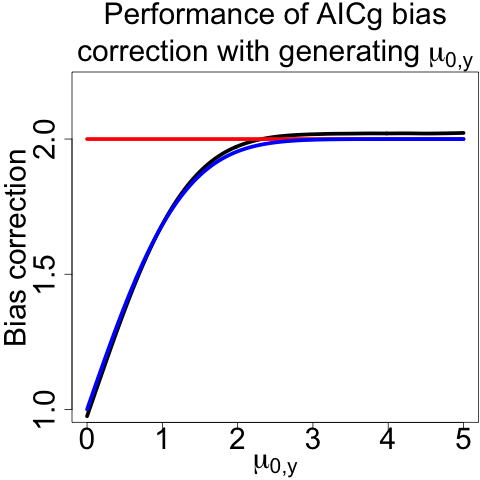}
  \caption{$n=100$}
  \label{fig:NiceImage2}
\end{subfigure}
\hfill
\begin{subfigure}{.32\linewidth}
  \includegraphics[width=1\linewidth]{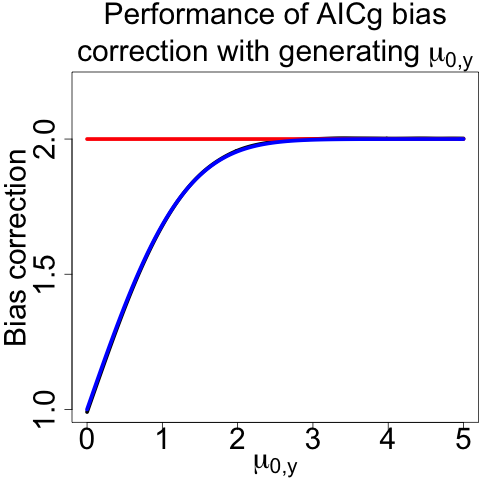}
  \caption{$n=1000$}
  \label{fig:NiceImage3}
\end{subfigure}
\hspace*{\fill} \\
\caption{Performance of AICg bias correction (blue) and AIC bias correction (red) compared to their target estimated by simulation (black) for model $T1$.}\label{figureT1}
\end{figure}

\subsection{Model $T3$}

Again using the transformation of Remark \ref{rmk:linear_transformation} and extension of parameter spaces, model $T3$ maps to three rays in $\mathbb{R}^2$ emanating from $\left(0,0\right)$. They are the non-negative $y$-axis and rays in quadrants $3$ and $4$ forming an angle 
$\alpha_0$ with the negative and positive $x$-axis respectively. If $\theta_0=\left(p_1, p_2, p_3\right)$, then with $p_i$ maximal for some $i \in\left\{1,2,3\right\}$,  $\phi_0\in\left(0,1\right]$ satisfies $p_i=1-\frac{2}{3}\phi_0$. With no loss of generality, we assume $\mu_0=\left(0,\mu_{0,y}\right)$ lies on the non-negative $y$-axis. After transformation, the bias correction depends on $\mu_{0,y}=\mu_{0,y}\left(n\right)$ and $\alpha_0=\arctan\left(\frac{1}{\sqrt{3\left(3-2\phi_0\right)}}\right)$, with $\beta_0=\frac{1}{2}\left(\frac{\pi}{2}-\alpha_0\right)$.

\begin{prop}
\label{propT3}
For model $T3$,
\begin{align*}
\text{AICg}=&-2\log{}L\left(\widehat{\theta}_n|Z_n\right) \\
&+\sqrt{\frac{2}{\pi}}\int_{0}^{\infty}\left(y-\mu_{0,y}\right)^2\exp\left(-\frac{1}{2}\left(y-\mu_{0,y}\right)^2\right)\erf\left(\frac{y\cot\beta_0}{\sqrt{2}}\right)dy \\
&+\frac{2}{\pi}\int_{\frac{-\pi}{2}}^{\beta_0}\int_{0}^{\infty}
g\exp\left(-\frac{1}{2}\left(r^2-2\mu_{0,y}r\sin\phi+\mu_{0,y}^2\right)\right)drd\phi,
\end{align*}
where
\begin{align*}
g=&g\left(r, \phi, \mu_{0,y}, \alpha_0\right) \\
=&r\left(r^2\cos^2\left(\phi+\alpha_0\right)-\mu_{0,y}r\left(\sin\phi-\sin\alpha_{0}\cos\left(\phi+\alpha_0\right)\right)+\mu_{0,y}^2\right).
\end{align*}
\end{prop}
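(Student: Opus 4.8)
The plan is to evaluate the transformed bias correction $2\,\mathbb{E}_0\{(\bar z_n-\mu_0)^T(\widehat\mu_n-\mu_0)\}$ directly, exactly as was done for model $T1$ in Proposition~\ref{propT1}, but now with $\mathcal{M}_0$ equal to the union of three rays. First I would identify, for an arbitrary realization $w=(x,y)^T$ of $\bar z_n$, the point $\widehat\mu_n = m_0(w)$ of $\mathcal{M}_0$ closest to $w$ in Euclidean distance. The plane splits into regions according to which of the three rays (or the common origin) realizes the minimum: there is a region whose nearest point is the positive $y$-axis, a region whose nearest point is the origin $(0,0)^T$, and two congruent regions whose nearest points lie on the two slanted rays in quadrants~3 and~4. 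The half-angle $\beta_0=\tfrac12(\tfrac\pi2-\alpha_0)$ arises precisely as the angular boundary between ``project onto the $y$-axis'' and ``project onto a slanted ray,'' which is why the first integral is cut off at $\beta_0$ and the double integral runs over $\phi\in[-\tfrac\pi2,\beta_0]$.

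Next I would compute the contribution from the region nearest the positive $y$-axis. There, $m_0=(0,y)^T$ with $y\ge 0$ as long as $(x,y)$ lies in the appropriate angular wedge; integrating $(w-\mu_0)^T(m_0-\mu_0)=(y-\mu_{0,y})^2$ against the bivariate standard normal centered at $\mu_0$, and performing the $x$-integral over the wedge $|x|\le y\cot\beta_0$ (where $m_0$ is on the $y$-axis), produces an $\erf(y\cot\beta_0/\sqrt2)$ factor and leaves the one-dimensional integral in $y$ that appears as the second line of the claimed formula. Then I would handle the two slanted-ray regions together by symmetry (a factor of $2$), switching to polar coordinates $(r,\phi)$ centered at the origin so that the Gaussian density becomes proportional to $\exp(-\tfrac12(r^2-2\mu_{0,y}r\sin\phi+\mu_{0,y}^2))$; the integrand $(w-\mu_0)^T(m_0-\mu_0)$ with $m_0$ the orthogonal projection of $(r\cos\phi,r\sin\phi)$ onto the ray at angle $\alpha_0$ from the $x$-axis, multiplied by $r$ from the Jacobian, is exactly the function $g(r,\phi,\mu_{0,y},\alpha_0)$ stated. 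The region nearest the origin contributes $(w-\mu_0)^T((0,0)^T-\mu_0)=-\mu_{0,y}(y-\mu_{0,y})=\mu_{0,y}^2-\mu_{0,y}y$, and I would need to check that this piece, together with whatever the likelihood-term normalization $\dim(\Theta)=2$ contributes, collapses so that no separate origin integral survives in the final expression—this is the bookkeeping that makes the three displayed lines add up to the full AICg.

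The main obstacle I expect is the geometry of the projection map onto a union of three rays and getting the region boundaries exactly right: unlike the single half-line in $T1$, here one must correctly determine the locus where two candidate nearest points (the $y$-axis versus a slanted ray, or a slanted ray versus the origin) are equidistant, verify these loci are the rays at angle $\beta_0$ and $-\tfrac\pi2$, and ensure the regions are partitioned without overlap or omission. A secondary nuisance is carrying the change of variables cleanly: the Cartesian computation for the $y$-axis region versus the polar computation for the slanted regions must be stitched at $\phi=\beta_0$, and one has to confirm the $r$-integral from $0$ to $\infty$ in polar coordinates indeed covers the entire slanted-ray region (it does, since each such region is a full angular sector based at the origin). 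Once the partition is pinned down, everything else is substitution of $(w-\mu_0)^T(m_0-\mu_0)$ and routine—if tedious—Gaussian integration, and the result then follows by adding the three regional contributions to $-2\log L(\widehat\theta_n|\mathcal{Z}_n)$.
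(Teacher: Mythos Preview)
Your approach is essentially the paper's: exploit the symmetry of $\mathcal{M}_0$ about the $y$-axis, split the right half-plane according to which ray is nearest, integrate the $y$-axis wedge in Cartesian coordinates (the inner $x$-integral over $|x|\le y\cot\beta_0$ yielding the $\erf$ factor) and the slanted-ray sector in polar coordinates (the integrand times the Jacobian $r$ giving exactly $g$), then double for the left half-plane.

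The one misconception is your expectation that a separate ``origin region'' must somehow collapse against a $\dim(\Theta)=2$ contribution. No such cancellation occurs because no such region exists. For model $T3$ the three rays are spaced so that every sector between adjacent rays has angular measure strictly less than $\pi$. Concretely, for $w$ in the right half-plane with polar angle $\phi\in[-\tfrac{\pi}{2},\beta_0]$, the angle between $w$ and the Quadrant-IV ray is $\phi+\alpha_0\in(-\tfrac{\pi}{2}+\alpha_0,\beta_0+\alpha_0]\subset(-\tfrac{\pi}{2},\tfrac{\pi}{2})$, so $\cos(\phi+\alpha_0)>0$ and the orthogonal projection onto that ray lands on the ray itself rather than at its endpoint. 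Thus the partition into ``nearest to the $y$-axis ray'' and ``nearest to a slanted ray'' already exhausts the plane (up to measure zero), and the two displayed integrals sum directly to the full bias correction $2\,\mathbb{E}_0\{(\bar z_n-\mu_0)^T(\widehat\mu_n-\mu_0)\}$ with nothing left over.
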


See Appendix~\ref{app1} for the proof.

\smallskip

Although the inner integral of the second term of the bias correction can be evaluated in
closed form, the bias correction as given here is quickly and easily evaluated by numerical integration.

In Figure~\ref{figureT3} we compare the performances of the AICg bias
correction, the AIC bias correction, and 
the approximated target for $\mu_{0,y}$ known. The target is estimated with the same method as for model $T1$.
The AICg bias correction
again quickly converges to its target, accurately interpolating from
$2+\frac{3\sqrt{3}}{2\pi}$ (for effective number of parameters 
$k_e = 1+\frac{3\sqrt{3}}{4\pi}>1=k$) at the singularity $\left(0, 0\right)$ to $2$ (for effective parameters $k_e=1=k$) 
infinitely far from the singularity. Convergence of the AIC is particularly slow at generating parameters near the parameter space singularity. Interestingly, while the AIC underestimated its target for model $T1$, for
this model the AIC overestimates, illustrating that effective number
of parameters can be smaller or larger than $\dim\left(\Theta_0\right)$.

\begin{figure}[!tb]
\hspace*{\fill}
\begin{subfigure}{.32\linewidth}
  \includegraphics[width=1\linewidth]{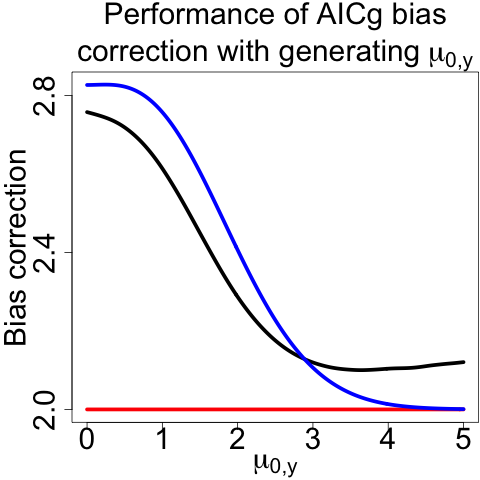}
  \caption{$n=30$}
  \label{fig:NiceImage1}
\end{subfigure}
\hfill
\begin{subfigure}{.32\linewidth}
  \includegraphics[width=1\linewidth]{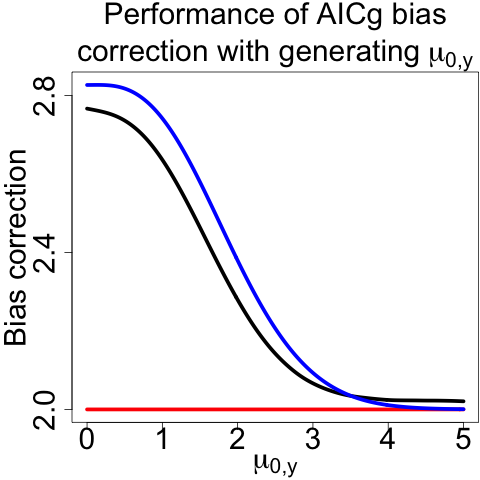}
  \caption{$n=100$}
  \label{fig:NiceImage2}
\end{subfigure}
\hfill
\begin{subfigure}{.32\linewidth}
  \includegraphics[width=1\linewidth]{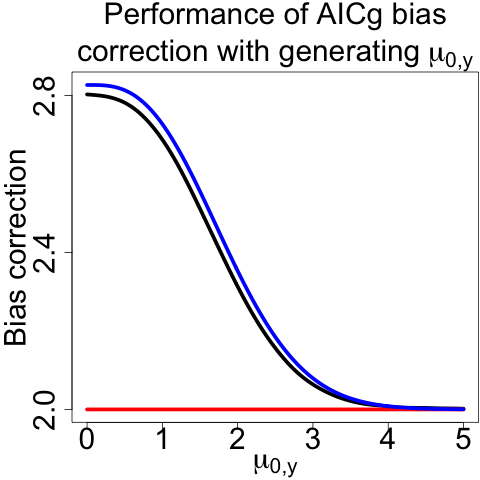}
  \caption{$n=1000$}
  \label{fig:NiceImage3}
\end{subfigure}
\hspace*{\fill} \\
\caption{Performance of AICg bias correction (blue) and AIC bias correction (red) compared to their target
	 estimated by simulation (black) for model $T3$.}\label{figureT3}
\end{figure}

\subsection{Multiple half-lines model}

It is instructive
to consider a generalization of the $T1$ and $T3$ models, composed of
many half-lines in a plane meeting at a singularity.  
This sheds light on how the angles between
these lines influence the bias correction, and thus the notion of
effective number of parameters at their meeting point. We assume $\mathcal{M}_0\subset\mathbb{R}^2$ is the union of
$l\geq{}1$ half-lines emanating from $\left(0,0\right)$.

\begin{prop}
\label{halflines}  
Suppose $\mathcal{M}_0$ for the multiple half-lines model has $l\in{}\mathbb{Z}^{+}$ half-lines, rays at angles $0<\alpha_1<\ldots<\alpha_l=2\pi$ counter-clockwise from the non-negative $x$-axis, with the largest sector that between the non-negative $x$-axis and the ray $\alpha_1$. For each $i\in\left\{1,2,\ldots{},l\right\}$, let $\varphi_i=\alpha_i-\alpha_{i-1}$, with $\alpha_0=0$. Then at $\mu_0=\left(0,0\right)$,
\begin{align*}
\text{AICg}=&\begin{cases}
    -2\log{}L\left(\widehat{\theta}_n|\mathcal{Z}_n\right)+2+\frac{1}{\pi}\sum_{i=1}^{l}\sin\left(\varphi_i\right), & \textrm{if }\varphi_1\in\left(0,\pi\right],
    \\
    -2\log{}L\left(\widehat{\theta}_n|\mathcal{Z}_n\right)+3+\frac{1}{\pi}\left(\sum_{i=2}^{l}\sin\left(\varphi_i\right)-\varphi_1\right), & \textrm{if }\varphi_1\in\left(\pi,2\pi\right].
  \end{cases}
\end{align*}
\end{prop}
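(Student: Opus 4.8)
The plan is to compute the AICg bias correction $2\mathbb{E}_0\left\{(\bar z_n-\mu_0)^T(\widehat\mu_n-\mu_0)\right\}$ at $\mu_0=(0,0)$ directly, using the transformed form of Equation~\eqref{AICgtransformed} and the fact that $\bar z_n$ is (asymptotically) $\mathcal N(\mu_0,I)=\mathcal N(0,I)$ here, so the integration is against a rotationally symmetric Gaussian. The key geometric input is that $\widehat\mu_n$ is the projection of a realization $w$ of $\bar z_n$ onto $\mathcal M_0$ in Euclidean distance. Since $\mathcal M_0$ is a union of rays through the origin, the plane splits into sectors, and the projection behaves differently depending on which sector $w$ falls in: if $w$ lies in a sector of angular width $\varphi_i\le\pi$, it projects onto one of the two bounding rays (whichever is closer, i.e.\ the plane is split by the angle bisector of that sector), giving $\widehat\mu_n = (\text{length})\cdot(\text{unit vector along that ray})$; if $w$ lies in a sector of width $>\pi$ (which can happen only for the distinguished first sector, by the hypothesis that $\varphi_1$ is the largest), then there is a sub-wedge of angular width $\varphi_1-\pi$ centered opposite the sector where the nearest point of every ray is the origin, so $\widehat\mu_n=(0,0)$ there, while the rest of that sector still projects onto a bounding ray.

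First I would set up polar coordinates $w=(r\cos\phi, r\sin\phi)$ and write the expectation as $\frac{1}{2\pi}\int_0^{2\pi}\int_0^\infty r\,(w\cdot \widehat\mu(w))\, e^{-r^2/2}\,r\,dr\,d\phi$, noting that at $\mu_0=0$ we have $(\bar z_n-\mu_0)^T(\widehat\mu_n-\mu_0)=w\cdot\widehat\mu(w)$. Next, for each ray $\alpha_j$ I would identify the angular set $S_j$ of directions $\phi$ that project onto that ray; for a sector $(\alpha_{j-1},\alpha_j)$ of width $\varphi_j\le\pi$ the contributing directions to ray $\alpha_j$ are those in $(\alpha_j-\varphi_j/2,\alpha_j)$ on one side and $(\alpha_j,\alpha_j+\varphi_{j+1}/2)$ on the other (angular half-widths from the two adjacent sectors). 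On $S_j$, $\widehat\mu(w)=(w\cdot u_j)\,u_j$ with $u_j$ the unit vector along $\alpha_j$ when $w\cdot u_j>0$, and the projection is the origin otherwise. Then $w\cdot\widehat\mu(w)=(w\cdot u_j)^2 = r^2\cos^2(\phi-\alpha_j)$ on the relevant directional set. Carrying out the radial integral $\frac{1}{2\pi}\int_0^\infty r^2 e^{-r^2/2}\, r\,dr = \frac{1}{2\pi}\cdot 2 = \frac{1}{\pi}$, the whole computation reduces to summing $\frac{1}{\pi}\int \cos^2(\phi-\alpha_j)\,d\phi$ over the appropriate angular intervals, using $\int \cos^2\psi\,d\psi = \frac{\psi}{2}+\frac{\sin 2\psi}{4}$. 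Summing a half-sector contribution $\frac{\varphi}{2}\cdot\frac12 + \frac{\sin\varphi \text{ or }\ldots}{4}$ over all sectors assembles, after the sector widths telescope to $2\pi$, into the clean form $2+\frac{1}{\pi}\sum_i\sin\varphi_i$ in the case all $\varphi_i\le\pi$.

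For the case $\varphi_1\in(\pi,2\pi]$ I would handle the first sector separately: split it into the sub-wedge of width $\varphi_1-\pi$ (centered on the direction $\alpha_1/2$... more precisely the direction opposite to where both bounding rays recede) on which $\widehat\mu_n=0$ and contributes nothing, and the two remaining pieces of angular width $\pi/2$ adjacent to rays $\alpha_1$ and $\alpha_l=2\pi$ respectively, which project onto those rays. The "missing" contribution relative to the naive formula is exactly $\frac1\pi\int$ over the dead wedge of the value that a full $\cos^2$ integral would have given, which accounts for the replacement of $2+\frac1\pi\sin\varphi_1$ by $3 - \frac{\varphi_1}{\pi} + (\text{the }\sin\varphi_1\text{ term absorbed})$; I expect bookkeeping the endpoints of these angular intervals and the $\cos^2$ antiderivative evaluations to be the one genuinely error-prone step. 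The main obstacle, then, is not any deep idea but getting the sector/half-sector decomposition and the boundary angles exactly right — in particular verifying that in the $\varphi_1>\pi$ case no ray other than $\alpha_1$ and $\alpha_l$ loses directional mass, which follows from $\varphi_1$ being the unique largest sector (so every other sector has width $\le\pi$ and its bounding rays each still collect a half-sector of width $\le\pi/2$ from it), and confirming the final algebra collapses to the stated piecewise formula.
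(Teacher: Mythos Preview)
Your proposal is correct and follows essentially the same approach as the paper's proof: polar coordinates, identifying the Euclidean projection of $w$ onto the nearest ray as $r\cos(\phi-\alpha_j)\,u_j$, reducing via the radial Gaussian integral $\int_0^\infty r^3 e^{-r^2/2}\,dr=2$ to $\cos^2$ integrals over angular intervals, and treating the $\varphi_1>\pi$ case by carving out the sub-wedge where the projection is the origin. The only cosmetic difference is that the paper partitions the plane by \emph{sector} (splitting each sector at its bisector) whereas you partition by \emph{ray} (collecting the half-sectors on either side); these are two equivalent bookkeepings of the same integral and lead to the same telescoping of $\sum_i\varphi_i=2\pi$.
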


For a proof, see Appendix~\ref{app2}. 

\smallskip

The following corollaries, for the special case that all angles $\varphi_i$ are equal, follow easily.

\begin{corollary}
Let $\varphi_i=\frac{2\pi}{l}$ for all $i\in\left\{1,2,\ldots{},l\right\}$. Then
\begin{align*}
\emph{AICg}=&\begin{cases}
    -2\log{}L\left(\widehat{\theta}_n|\mathcal{Z}_n\right)+1, & \textrm{if }l=1, \\
    -2\log{}L\left(\widehat{\theta}_n|\mathcal{Z}_n\right)+2+\frac{l}{\pi}\sin\left(\frac{2\pi}{l}\right), & \textrm{if }l\in\left\{2,3,\ldots\right\}.
  \end{cases}
\end{align*}
\end{corollary}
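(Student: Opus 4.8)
The plan is to derive the corollary directly from Proposition~\ref{halflines} by specializing to the case $\varphi_i = \frac{2\pi}{l}$ for all $i$. First I would handle the case $l=1$: here there is a single half-line, so $\varphi_1 = 2\pi \in (\pi, 2\pi]$, and the second branch of Proposition~\ref{halflines} applies. The sum $\sum_{i=2}^{l}\sin(\varphi_i)$ is empty and hence zero, so the bias correction becomes $3 + \frac{1}{\pi}(0 - 2\pi) = 3 - 2 = 1$, matching the claimed formula. This is also a reassuring consistency check against Proposition~\ref{propT1} at $\mu_{0,y}=0$, where $\erf(0)=0$ gives bias correction $1$.

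Next I would treat $l \geq 2$. In this case $\varphi_1 = \frac{2\pi}{l} \leq \pi$, so $\varphi_1 \in (0,\pi]$ and the first branch of Proposition~\ref{halflines} applies, giving bias correction $2 + \frac{1}{\pi}\sum_{i=1}^{l}\sin(\varphi_i)$. Since every $\varphi_i$ equals $\frac{2\pi}{l}$, the sum collapses to $l\sin\!\left(\frac{2\pi}{l}\right)$, yielding $2 + \frac{l}{\pi}\sin\!\left(\frac{2\pi}{l}\right)$, exactly as stated. One small point to verify is that the hypothesis of Proposition~\ref{halflines}---that the largest sector is the one between the non-negative $x$-axis and the ray $\alpha_1$---can be arranged: with all sectors equal, every sector is simultaneously a largest sector, so we may simply label one of them as the sector adjacent to the positive $x$-axis, and relabel the rays accordingly. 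Then $\alpha_i = \frac{2\pi i}{l}$ and $\varphi_i = \alpha_i - \alpha_{i-1} = \frac{2\pi}{l}$ for all $i$, as required.

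Since this is purely a matter of substituting equal angles into an already-proved formula, there is no real obstacle; the only thing that requires a moment's care is the boundary between the two cases of Proposition~\ref{halflines}, namely checking that $\frac{2\pi}{l} \leq \pi$ precisely when $l \geq 2$ and $\frac{2\pi}{l} > \pi$ precisely when $l = 1$. This is immediate. I would present the argument in two short displayed lines, one per case, rather than as a computation, since nothing beyond arithmetic is involved.
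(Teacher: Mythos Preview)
Your proposal is correct and follows exactly the approach the paper intends: the paper states only that the corollary ``follows easily'' from Proposition~\ref{halflines}, and your case split on $l=1$ versus $l\ge 2$ with direct substitution of $\varphi_i=\frac{2\pi}{l}$ into the appropriate branch is precisely that easy derivation. Your check that the ``largest sector'' hypothesis is trivially met when all sectors are equal, and your consistency check against Proposition~\ref{propT1} at $\mu_{0,y}=0$, are nice touches but not strictly needed.
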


\begin{corollary}
Let $\varphi_i=\frac{2\pi}{l}$ for all $i\in\left\{1,2,\ldots{},l\right\}$. Then as $l\to\infty$,
\begin{align*}
\emph{AICg}\to&-2\log{}L\left(\widehat{\theta}_n|\mathcal{Z}_n\right)+4.
\end{align*}
\end{corollary}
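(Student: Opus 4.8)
The plan is to start from the formula in the preceding corollary, which already disposes of the equal-angle case, and simply pass to the limit in $l$. First I would observe that for every $l\ge 2$ we have $\varphi_1=\frac{2\pi}{l}\le\pi$, so the second branch of that corollary (equivalently, the $\varphi_1\in(0,\pi]$ case of Proposition~\ref{halflines}) applies, and for all sufficiently large $l$,
\begin{align*}
\text{AICg}=-2\log{}L\left(\widehat{\theta}_n|\mathcal{Z}_n\right)+2+\frac{l}{\pi}\sin\left(\frac{2\pi}{l}\right).
\end{align*}
In particular the likelihood term $-2\log L\left(\widehat{\theta}_n|\mathcal{Z}_n\right)$ does not depend on $l$, so the only thing to analyze is the limit of the bias correction $2+\frac{l}{\pi}\sin\left(\frac{2\pi}{l}\right)$.

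Next I would compute $\lim_{l\to\infty}\frac{l}{\pi}\sin\left(\frac{2\pi}{l}\right)$. Substituting $t=\frac{2\pi}{l}$, which tends to $0^{+}$ as $l\to\infty$, this equals $\lim_{t\to 0^{+}}\frac{2}{t}\sin t=2\lim_{t\to 0^{+}}\frac{\sin t}{t}=2$. Hence the bias correction converges to $2+2=4$, and combining with the previous display gives $\text{AICg}\to-2\log L\left(\widehat{\theta}_n|\mathcal{Z}_n\right)+4$, as claimed.

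There is essentially no obstacle here: the only points requiring care are invoking the correct branch of the preceding corollary (valid for all $l\ge 2$, hence for all large $l$) and noting that the $-2\log L$ term is untouched by the limit. If a rate is desired, one can expand $\sin t=t-\frac{t^{3}}{6}+O\left(t^{5}\right)$ to get $\frac{l}{\pi}\sin\left(\frac{2\pi}{l}\right)=2-\frac{4\pi^{2}}{3l^{2}}+O\left(l^{-4}\right)$, so the bias correction approaches $4$ from below at rate $O\left(l^{-2}\right)$; this is not needed for the stated limit but explains why the effective number of parameters grows toward $2$ (twice the ambient dimension of the plane) as the half-lines become dense.
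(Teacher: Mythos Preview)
Your proof is correct and is precisely the elementary limit computation the paper has in mind when it says the corollary follows easily from the preceding one. One small caveat: the remark that the likelihood term does not depend on $l$ is not literally true (since $\widehat{\theta}_n$ varies with the model), but it is harmless here because the corollary, as stated, carries the same symbol $-2\log L(\widehat{\theta}_n\mid\mathcal{Z}_n)$ on both sides, so its content is exactly that the bias correction $2+\frac{l}{\pi}\sin\!\left(\frac{2\pi}{l}\right)\to 4$, which you establish correctly.
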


\smallskip

\begin{remark}
	Note that models $T1$ ($l=1$) and $T3$ ($l=3$) are special cases of the multiple half-lines
	model, when the generating parameter is the boundary/singularity of the parameter space.
	Moreover, as $l \to \infty$, as in the preceding corollary, we obtain the unconstrained model $U$.
	An easy extension to the case that $l = 0$ yields the polytomy model, with a single point
	parameter space.  Since both the polytomy model and $U$ are regular (no singularities or boundaries)
	and linear at all points, the AIC and the AICg coincide for these models.
\end{remark}

\smallskip

In the phylogenomics applications motivating this work, empiricists might use model selection to
choose between a particular rooted triple, say $a|bc$ (in which species $b$ and $c$ are most closely related),
and a star tree showing no pair of the three most closely related.  This requires selecting between model $T1$ and
the polytomy model.  Similarly, AICg model selection might be used to choose between relating three
species with rooted tree ($T3$), or rejecting tree-like evolution if the unconstrained model $U$ is
selected.  In Figure~\ref{modelcomparisons} we show AICg model selection results for a sample
of size $n = 200$, when $\widehat{\mu}_n$ is used to evaluate the bias correction.

\begin{figure}[!tb]
\hspace*{\fill}
\begin{subfigure}{.4\linewidth}
\caption{$T1$ vs polytomy}
  \includegraphics[width=1\linewidth]{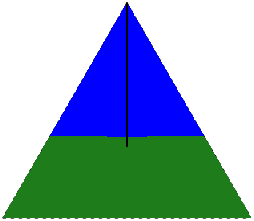}
\end{subfigure}
\hfill
\begin{subfigure}{.4\linewidth}
\caption{$T3$ vs unconstrained}
  \includegraphics[width=1\linewidth]{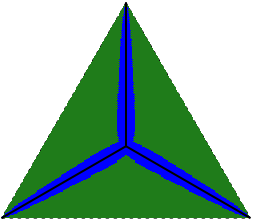}
\end{subfigure}
\hspace*{\fill}
\caption{Model selection outcomes based on the AICg using $\widehat{\mu}_{n}$ and $n=200$. 
	(a) For observations in the blue region, Model $T1$ is selected; for observations
	in green, the polytomy model is selected. 
	(b) Model $T3$ is selected for observations in the blue region; 
	the unconstrained model $U$ is selected in the green region.}
\label{modelcomparisons}
\end{figure}

\section{Using the AICg in practice}
\label{practicalAICg}

As the AICg bias correction may be a function of the unknown generating
parameter, 
practical means for estimating the parameter or directly estimating the bias correction are needed. We restrict ourselves to the transformed basis introduced in Remark~\ref{rmk:linear_transformation} as the generating parameter and bias correction are typically easiest to estimate in this basis.

The simplest solution, using the AICg and estimating $\mu_0$ with $\widehat{\mu}_n$, 
often results in a much more accurate estimate of the target than the AIC does, yet, like the AIC, the AICg with $\widehat{\mu}_n$ is not generally a consistent estimator of the target at singularities and boundaries.
In this section we outline several methods that might be used to 
estimate bias corrections in practice.  In addition to the AIC bias correction and AICg bias correction
computed with the MLE,
these include \emph{least favorable bias corrections},
\emph{uniformly outperforming bias corrections} and
\emph{minimax bias corrections}.
We illustrate how these bias corrections might perform, by comparing and contrasting
them to the AIC bias correction and AICg bias correction with known parameter for example models $T1$ and $T3$. 
We conclude with a brief discussion of  \emph{consistent bias corrections}
and how our ideas might extend to more complicated models.

\subsection{Some practical bias corrections}

\subsubsection{Lower and upper least favorable bias corrections}
\emph{Least favorable} bias corrections are inspired by the least
favorable method of hypothesis testing, dating back to at least
\citet{self1987asymptotic}. 
The lower/upper least favorable bias corrections $c_{llf}/c_{ulf}$ correspond to the 
infimum/supremum of the bias corrections over $\mathcal{M}_0$.

\subsubsection{Uniformly outperforming bias corrections}

\emph{Uniformly outperforming} bias corrections $c_{uo}$ are those that outperform the AIC bias correction; that is, if $c_{uo}$ has bias, it has the same sign as the bias of the AIC bias correction everywhere in $\mathcal{M}_0$, but with smaller magnitude. One such construction is to consider neighborhoods of fixed radii
around each singularity and boundary in $\mathcal{M}_0$, with the
radii chosen to satisfy the uniformly outperforming criterion.  More
specifically, if $\widehat{\mu}_n$ or
$\bar{z}_n$ lies inside a
neighborhood, then the bias correction is taken to be
that at the singularity/boundary within the neighborhood. Otherwise, the bias correction is that of the AIC. If $\widehat{\mu}_n$ or
$\bar{z}_n$ lies inside
multiple neighborhoods, then the neighborhood of the
singularity/boundary closest in Euclidean distance is used, with ties settled
at random.

\subsubsection{Minimax bias corrections}

\emph{Minimax} bias corrections $c_{m}$ are similar to uniformly outperforming
bias corrections, except that the radii  of singularity/boundary neighborhoods
are chosen to minimize the supremum of a risk function over $\mathcal{M}_0$. 
In the comparisons we make below, we choose the risk
function to be the $\mathcal L^2$ norm between the ``true'' bias
correction from known $\mu_0$ and the expected estimated bias
correction from the method described with neighborhoods enclosing singularities and boundaries.

\smallskip

In practice, to determine whether a bias correction is least favorable, uniformly outperforming or minimax, we typically assume $n$ is sufficiently large for the AICg bias correction with known $\mu_0$ to accurately approximate the ``true'' bias correction.

\subsection{Comparison of bias corrections for models $T1$ and $T3$}

The least favorable bias corrections for model $T1$ are $1$, corresponding to
 $\mu_{0,y}=0$, and $2$, as $\mu_{0,y}\to\infty$. A simple uniformly
outperforming bias correction is $c_{uo}=1$ if $\widehat{\mu}_{n,y}=0$,
and $c_{uo}=2$ otherwise, since
$1+\erf\left(\frac{\mu_{0,y}}{\sqrt{2}}\right)<\mathbb{E}_0\left\{c_{uo}\right\}<2$. Our minimax bias correction is $c_{m}=1$
if $\widehat{\mu}_{n,y}\leq{}0.95$, and $c_{m}=2$ otherwise.

For model $T3$, the least favorable bias corrections are $2$
($\mu_{0,y}\to\infty$) and $2+\frac{3\sqrt{3}}{2\pi}$ ($\mu_{0,y}=0$). Finding a strictly uniformly outperforming bias correction for model $T3$ is challenging, however. Instead, we use a bias correction for $T3$ that
\emph{almost} uniformly outperforms the AIC bias correction, which we still refer to as the uniformly outperforming bias correction. The uniformly outperforming bias correction and the AIC bias correction are permitted to have opposite sign biases if the bias of the uniformly outperforming bias correction has small magnitude.

The AICg bias correction depends on $n$ 
in multiple ways, not only through $\mu_{0,y}=\mu_{0,y}\left(n\right)$. 
Thus, we set $n=10^6$ so that the AICg bias correction closely approximates its
target and base all other bias corrections off $n=10^6$. With this procedure, $c_{uo}=2+\frac{3\sqrt{3}}{2\pi}$ if
$\left\Vert\bar{z}_n\right\Vert \le r $ and $c_{uo}=2$ otherwise. Setting the radius $r=1.77$ ensures that if the uniformly outperforming bias correction and AIC bias correction have different sign biases, the magnitude of the bias of the uniformly outperforming bias correction does not exceed $1.02\times{}10^{-14}$. For the minimax bias correction $c_{m}$, we use the same procedure with $r=2.21$.

In Figure~\ref{figureT1a}, we compare the performances of these 
bias corrections for models $T1$ and $T3$ under the assumption that the AICg
bias correction with known $\mu_{0,y}$ closely approximates its
target.   The lower and upper least favorable corrections sandwich
all other bias corrections, and the AIC bias correction is the upper least favorable for $T1$
and the lower least favorable for $T3$.  These are the worst performing bias
corrections, in the sense that they deviate most markedly from the blue
curve, the bias correction of the AICg with known $\mu_0$, which well approximates the target.
Of the data dependent bias corrections, the bias correction of the AICg using $\widehat{\mu}_{n}$, whose expectation was estimated by averaging $10^7$ simulations for each $\mu_{0,y}\in\left\{0,0.02,0.04,\ldots{},5\right\}$, is perhaps the easiest to use for these models and performs well.

\begin{figure}[!tb]
	\hspace*{\fill}
	\begin{subfigure}{.44\linewidth}
		\includegraphics[width=1\linewidth]{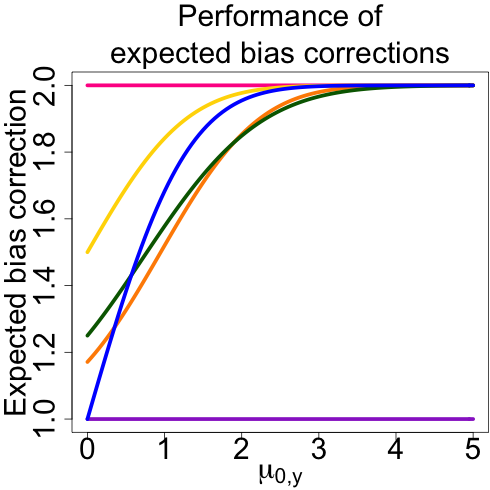}
		\caption{Model $T1$}
		\label{fig:NiceImage4}
	\end{subfigure}
	\hfill
	\begin{subfigure}{.44\linewidth}
		\includegraphics[width=1\linewidth]{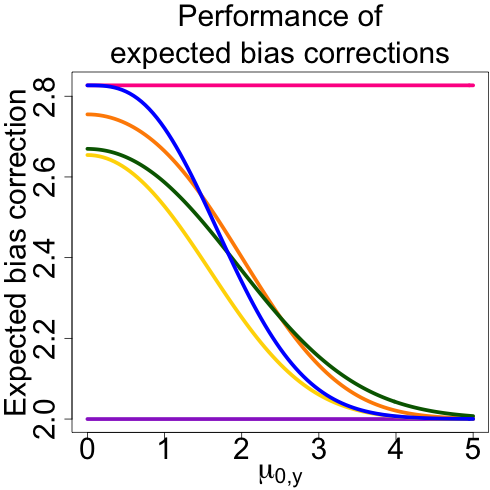}
		\caption{Model $T3$}
		\label{fig:NiceImage4}
	\end{subfigure}
	\hspace*{\fill} \\
	\caption{Performance of expected bias corrections for model $T1$ (left) and model $T3$ (right). Bias corrections are the upper least favorable (also AIC for $T1$) (pink), the uniformly outperforming (yellow), the AICg using $\widehat{\mu}_{n,y}$ (green), the minimax (orange), the AICg using $\mu_{0,y}$ (blue) and the lower least favorable (also AIC for $T3$) (purple).}
	\label{figureT1a}
\end{figure}

\subsection{Consistent bias corrections}

In circumstances that the sample size $n$ can be chosen before an experiment is performed, a consistent
bias correction method may be preferable, with $n$ chosen
sufficiently large to ensure a desired accuracy. Such an estimator
could be obtained using similar procedures to the first method of
\citet{andrews2000inconsistency}. This method is similar to the methods for obtaining uniformly outperforming and minimax bias corrections, except radii of neighborhoods increase with $n$ in such a way that $\widehat{\mu}_n$ or
$\bar{z}_n$ converge
almost surely to be inside/outside a neighborhood if $\mu_0$ is/is not the
singularity or boundary inside the ball.

The radii are chosen to increase with $n$ more slowly than
$\mu_0$ for generic parameters; that is, more slowly than $O\left(\sqrt{n}\right)$. Suppose
$\mu^{\dagger}$ is the closest singularity or boundary to
$\widehat{\mu}_n$ or
$\bar{z}_n$. Then $\mu_0$ 
is estimated by the singularity or boundary if
$\left\Vert{}\widehat{\mu}_n-\mu^{\dagger}\right\Vert\leq{}\sqrt{n} \eta_n$
or
$\left\Vert\bar{z}_n-\mu^{\dagger}\right\Vert\leq{}\sqrt{n} \eta_n$,
where $\sqrt{\frac{\log\log{}n}{n}}<O\left(\eta_n\right)<1$.

The second method of \citet{andrews2000inconsistency} can also be
adapted to consistently estimate the target using a parametric
bootstrap. For this, first obtain a consistent estimate $\widetilde{\mu}_n$
of $\mu_0$, as in the first method above.  Then generate parametric bootstraps
$\widetilde{z}_i^{\ast}\sim\mathcal{N}\left(\widetilde{\mu}_n,I\right)$. For
each bootstrap, estimate the generating parameter
$\widetilde{\mu}_i^{\ast}$ as described in the first method above. Then take the bias correction to be
$\left(\widetilde{z}_i^{\ast}-\widetilde{\mu}_n\right)^T\left(\widetilde{\mu}_i^{\ast}-\widetilde{\mu}_n\right)$. Finally,
average this quantity over all bootstrap replicates to obtain the estimate of the AICg bias
correction.

\subsection{Estimating the bias correction in complicated models}

The AICg bias correction can be difficult to determine for complicated
models. In these circumstances the AIC might be preferable.  This requires a rigorous
justification,  and one  
should establish that $\theta_0$ is likely to be far from any
singularity or boundary, or that the expected AIC is still an accurate estimator of its target, despite the presence of
singularities or boundaries.

For estimating the proximity of $\theta_0$ to any singularity or
boundary, one might use the Mahalanobis distances from $\widehat{\theta}_n$ or
$\bar{\mathcal{Z}}_n$ to singularities/boundaries 
 (equivalently, the Euclidean distance from $\widehat{\mu}_n$ or
$\bar{z}_n$), which can be estimated by parametric or non-parametric bootstrapping. As an alternative, one might conclude that $\theta_0$ is likely far from all singularities or boundaries if the proportion of parametric or non-parametric bootstrap maximum likelihood estimates on the same irreducible component of $\Theta_0$ is high.

\smallskip

When one cannot justify using the AIC and the AICg is difficult to
compute, crude bounds on the AICg bias correction may be more
practical. These bounds may be sufficient for model selection, even if
they are not infima/suprema, particularly when the maximum likelihood term of the AIC/AICg
differs substantially between models, making the bias correction of little consequence. Lower/upper bounds could be
obtained by replacing $\Theta_0$ with affine subspaces/superspaces,
and estimating the bias correction by twice the dimension of the
space. In a similar vein, $\Theta_0$ could be replaced by spaces with
desirable geometric properties, such as half-spaces or orthants, and
the bias correction estimated. From Proposition~\ref{nonnegpenalty}, we conclude that asymptotically one such lower bound for all models is $0$. In short, careful consideration of models 
and their geometries should be undertaken before determining whether to use the AIC.

\section{Discussion}
\label{discussion}

Despite being common in practical applications, the effect that
singularities and boundaries have on the accuracy of the AIC has not
received sufficient attention in the literature to date. Best practice should include attempts to determine whether regularity conditions are satisfied and, if not, the effect of their violation on the accuracy of the AIC. We emphasize the importance of establishing that the AIC consistently estimates its target. If the AIC does not consistently estimate its target or accuracy cannot be quantified, the AICg may be a more appropriate alternative. Estimating the bias correction via bootstrap procedures or crude bounds are both fairly accessible. Crude bounds could be interpreted as being analogous to conservative hypothesis testing procedures; a crude lower/upper bound could be chosen if one wants to preference that model more/less relative to other models.

Indeed as shown here, consideration of the parameter space geometry and its consequences
on model selection performance for generating parameters at or near singularities and boundaries can facilitate more accurate model selection practices.  In contrast to the AIC, 
the expected AICg is a consistent estimator of its target at singularities and boundaries and can converge more quickly than the AIC elsewhere. 

Other model selection procedures,
such as the Bayesian information criterion (BIC) \citep{schwarz1978estimating}, as well as cross-validation, may perform
poorly in the presence of singularities and boundaries. Generalized versions may also be more appropriate.

Methods described here may be more appropriate for models
with high curvature at generating parameters that are not singularities or boundaries. For such geometry
the AIC may converge slowly. Large $n$ may be required for $\Theta_0$ to be
approximately locally linear and for the AIC to be accurate.

\section{Acknowledgements}
The National Institutes of Health [2P20GM103395], an NIGMS
Institutional Development Award (IDeA), and the National Science
Foundation award [DMS 2051760] supported this work in part.

\appendix

\section{Derivation of AICg for Model $T3$}
\label{app1}

\begin{proof}[Proof of Proposition~\ref{propT3}]

Without loss of generality assume that $\mu_0 = \left(0,\mu_{0,y}\right)$ lies on the
vertical half-line of model $T3$. Since $\mathcal M_0$ is
symmetric about the $y$-axis, to determine the bias correction
we integrate
only over the right half-plane $x > 0$, then multiply the result by 2.
To accomplish this integration, 
$2\mathbb{E}_0\left\{\left(\bar{z}_n-\mu_0\right)^T\left(\widehat{\mu}_n-\mu_0\right)\right\}$,
we divide the right half-plane into two regions,  $y>x\tan\beta_0$
and $y<x\tan\beta_0$, since $\bar z_n$ is closer to the vertical model line segment
in the first region, and to the model line segment in Quadrant IV in the second
region. (These are the same angles and model half-lines of Figure~12 of \citet{mitchell2019hypothesis}.)

Using the notation as in the proof of Proposition~\ref{propT1},
when $w=\left(x,y\right)^T$ is in the first region, the closest
model point is $m_0=\left(0,y\right)^T$ and 
$\left(w-\mu_0\right)^T\left(m_0-\mu_0\right) = 
\left(y-\mu_{0,y}\right)^2$.
In this region, 
\begin{align*}
&2\mathbb{E}_0\left\{\left(\bar{z}_n-\mu_0\right)^T\left(\widehat{\mu}_n-\mu_0\right)\right\} \\
=&4\int_{0}^{\infty}\int_{0}^{y\cot\beta_0}\left(y-\mu_{0,y}\right)^2\frac{1}{2\pi}\exp\left(-\frac{1}{2}\left(x^2+\left(y
-\mu_{0,y}\right)^2\right)\right)dxdy \\
=&\sqrt{\frac{2}{\pi}}\int_{0}^{\infty}\left(y-\mu_{0,y}\right)^2\exp\left(-\frac{1}{2}\left(y-\mu_{0,y}\right)^2\right)
\erf\left(\frac{y\cot\beta_0}{\sqrt{2}}\right)dy.
\end{align*}

When $w$ is in the second region, where $y<x\tan\beta_0$, since $\alpha_0$ is acute, 
the closest model point is $m_0=\left(x_0,-x_0\tan\alpha_0\right)$, with $x_0$ to be
determined.
Using polar coordinates for $\bar z_n$, let $r = \sqrt{x^2+y^2}$
and $\varphi \in (- \frac \pi 2, \beta_0) = \arctan \left( \frac {y} {x} \right)$. With this notation, the angle between $m_0$ and
$\bar z_n$ (viewed as vectors) is $\varphi + \alpha_0$.  Thus, 
$x_0 = \left( r \cos (\varphi + \alpha_0) \right) \cos \alpha_{0}$
and $y_ 0 =  \left( r \cos (\varphi + \alpha_0) \right) \sin \alpha_{0}$. Thus,
\begin{align*}
&\left(w-\mu_0\right)^T\left(m_0-\mu_0\right) \\
=&\left[{\begin{array}{cc}
   r\cos\varphi & \sin\varphi-\mu_{0,y} \\
  \end{array} } \right]\left[ {\begin{array}{c}
   r\cos\left(\varphi+\alpha_0\right)\cos\alpha_0 \\
   -r\cos\left(\varphi+\alpha_0\right)\sin\alpha_0-\mu_{0,y} \\
  \end{array}}\right]  \\
=&r^2\cos^2\left(\varphi+\alpha_0\right)-\mu_{0,y}r\left(\sin\varphi-\sin\alpha_{0}\cos\left(\varphi+\alpha_0\right)\right)+\mu_{0,y}^2.
\end{align*}

It follows that, in this region,
\begin{align*}
&2\mathbb{E}_0\left\{\left(\bar{z}_n-\mu_0\right)^T\left(\widehat{\mu}_n-\mu_0\right)\right\} \\
=&\frac{2}{\pi}\int_{-\frac{\pi}{2}}^{\beta_0}\int_{0}^{\infty}r\left(r^2\cos^2\left(\varphi+\alpha_0\right)
-\mu_{0,y}r\left(\sin\varphi-\sin\alpha_{0}\cos\left(\varphi+\alpha_0\right)\right)+\mu_{0,y}^2\right) \\
&\exp\left(-\frac{1}{2}\left(r^2-2\mu_{0,y}r\sin\varphi+\mu_{0,y}^2\right)\right)drd\varphi.
\end{align*}

The result then follows.

\end{proof}

\section{Derivation of the AICg for the multiple half-lines model}
\label{app2}

\begin{proof}[Proof of Proposition~\ref{halflines}] We use the same notation as in the proof of Proposition~\ref{propT1}. We compute 
	$2 \mathbb{E}_0\left\{\left(\bar{z}_n-\mu_0\right)^T\left(\widehat{\mu}_n-\mu_0\right)\right\}
	= 2 \mathbb{E}_0 \left\{  \bar z_n^T\widehat \mu_n  \right\}$
	when $\mu_0=0$, by integrating over sectors of the plane of measure $\varphi_i$.

The value of the AICg depends on the measure of $\varphi_1$, and we suppose first that $\varphi_1\in\left(0,\pi\right]$
	and that $w =  \left(x,y\right) = \left(r \cos \theta, r \sin \theta\right)$ is a realization of $\bar{z}_n$ in the first
	sector $R_1$, where $\theta\in\left[0,\varphi_1\right]$. The closest model point $m_0$ is 
	on the non-negative $x$-axis if $\theta\in\left[0,\frac{\varphi_1}{2}\right]$, and on
	$\alpha_1$ if $\theta\in\left[\frac{\varphi_1}{2},\varphi_1\right]$. When $\theta\in\left[0,\frac{\varphi_1}{2}\right]$,
	$m_0 = \left(x, 0\right) = \left(r \cos \theta, 0\right)$. 
	 When $\theta\in\left[\frac{\varphi_1}{2},\varphi_1\right]$, 
	$m_0 = \left( r \cos \left(\varphi_1 - \theta\right)\cos \varphi_1,  r \cos \left(\varphi_1 - \theta\right)   \sin  \varphi_1 \right)$.
	Thus, over this sector, with area element $dA$, we find
	\begin{align*}
		&2 \iint_{R_1}  \bar z_n^T\widehat \mu_n \frac{1}{2 \pi} \exp\left( - \frac{1}{2}  (x^2+y^2) \right) dA \\
		=&\frac{1}{\pi} \left[\int_0^{\frac{\varphi_1}{2}}  \cos^2\theta d\theta
		          +  \int_{\frac{\varphi_1}{2}}^{\varphi_1} \cos^2\left(\varphi_1 - \theta\right) d\theta\right] \int_0^\infty  r^3 \exp\left( - \frac{1}{2}  r^2 \right) dr  \\
		=&\frac{1}{\pi} \left[ 2 \left(\frac{\varphi_1}{4} + \frac{\sin \varphi_1}{4 } \right)  \right]\cdot{}2 \\
		=&\frac{1}{\pi} \left( \varphi_1 + \sin \varphi_1 \right).
	\end{align*}
Since $\varphi_i\leq{}\varphi_1\leq{}\pi$ for all $i\in\left\{2,3,\ldots,l\right\}$, and the Gaussian
density is symmetric about the origin, we find that 
$$
2 \mathbb{E}_0 \left\{  \bar z_n^T\widehat \mu_n  \right\} = 
\sum_{i=1}^{l} \frac{1}{\pi} \left( \varphi_i  + \sin \varphi_i  \right) = 
2 +  \frac{1}{\pi}\sum_{i=1}^{l} \sin \varphi_i.
$$

For the second case, $\varphi_i\in\left(\pi,2\pi\right]$. 
Then $\varphi_i\in\left(0,\pi\right)$ for all $i\in\left\{2,3,\ldots,l\right\}$.  
If $w = \left(r \cos \theta, r \sin \theta\right)$, then the model point $m_0 = \left(0, 0\right)$ is
closest to $w$ if $\varphi_1\in\left[\frac{\pi}{2},\varphi_1 - \frac{\pi}{2}\right]$.
When $\theta\in\left[0,\frac{\pi}{2}\right]$, then as above 
$$2 \int_0^{\frac{\pi}{2}} \int_0^\infty 
\bar z_n^T\widehat \mu_n \frac{1}{2\pi} \exp \left( - \frac{1}{2} r^2 \right) r dr d\theta =
\frac{1}{\pi} \left( \frac{\pi}{4} + \sin \left( \pi \right) \right) \cdot{}2 = \frac{1}{2} .
$$
Again using that the Gaussian is symmetric, integrating over $\theta \in \left[\varphi_1 - \frac{\pi}{2}, \varphi_1\right]$ 
also yields the value $\frac{1}{2}$.  Thus, when $\varphi_1\in\left(\pi,2\pi\right]$,
$$
	2 \mathbb{E}_0 \left\{  \bar z_n^T\widehat \mu_n  \right\} = \\
\sum_{i=2}^{l} \frac{1}{\pi} \left( \varphi_i  + \sin \varphi_i  \right) + 2\cdot{}\frac{1}{2}
=3+\frac{1}{\pi}\left(\sum_{i=2}^{l}\sin\varphi_i-\varphi_1\right).
$$
\end{proof}

\bibliographystyle{unsrtnat}
\bibliography{bibliography}

\end{document}